\definecolor{blue}{rgb}{0,0,0.7}
\definecolor{red}{rgb}{0.75, 0, 0}
\newtheorem{theorem}{Theorem}[section]
\newtheorem{theorem-definition}[theorem]{Theorem-Definition}
\newtheorem{theorem-construction}[theorem]{Theorem-Construction}
\newtheorem{lemma-definition}[theorem]{Lemma--Definition}
\newtheorem{lemma-construction}[theorem]{Lemma--Construction}
\newtheorem{lemma}[theorem]{Lemma}
\newtheorem{proposition}[theorem]{Proposition}
\newtheorem{conjecture}[theorem]{Conjecture}
\newtheorem{definition}[theorem]{Definition}
\begin{document}
\newcommand{\Z}{{\mathbb Z}}
\newcommand{\G}{{\rm G}}
\newcommand{\bt}{\begin{theorem}}
\newcommand{\et}{\end{theorem}}
\newcommand{\bl}{\begin{lemma}}
\newcommand{\el}{\end{lemma}}
\newcommand{\bc}{\begin{conjecture}}
\newcommand{\ec}{\end{conjecture}}
\newcommand{\bd}{\begin{definition}}
\newcommand{\ed}{\end{definition}}
\newcommand{\bp}{\begin{proposition}}
\newcommand{\ep}{\end{proposition}}
\newcommand{\be}{\begin{equation}}
\newcommand{\ee}{\end{equation}}
\newcommand{\la}{\label}
\newcommand{\PP}{{\mathbb P}}
\newcommand{\R}{{\mathbb R}}
\newcommand{\Q}{{\mathbb Q}}
\newcommand{\C}{{\mathbb C}}
\newcommand{\lms}{\longmapsto}
\newcommand{\lra}{\longrightarrow}
\newcommand{\hra}{\hookrightarrow}
\newcommand{\ra}{\rightarrow}
\newcommand{\sgn}{\rm sgn}
\newcommand{\bS}{\Bbb S}
\begin{titlepage}
\title{Symplectic double for moduli spaces of $\G$-local systems on surfaces}   
\author{V.V. Fock, A.B. Goncharov}
\end{titlepage}
\date{\it To the memory of Andrei Zelevinsky }
\maketitle

\tableofcontents
\begin{abstract}\begin{footnotesize}
A {\it decorated surface} ${\bS}$  is 
a topological oriented surface with punctures and holes, equipped with a finite 
set  of special points on the boundaries of holes, considered modulo isotopy. 
Each hole boundary has at least one special point. 

Let $\G$ be a  split semi-simple algebraic group over $\Q$. 
We introduce a moduli space 
${\cal D}_{\G, {\Bbb S}}$, and define a collection of special rational coordinate 
systems on it.

 The moduli space ${\cal D}_{\G, {\Bbb S}}$ is the symplectic double 
of the Poisson moduli space ${\cal X}_{\G, {\Bbb S}}$ of framed $\G$-local systems 
on $S$. Its dimension is ${\rm dim}{\cal D}_{\G, {\Bbb S}}= 2{\rm dim}{\cal X}_{\G, {\Bbb S}}$. 
Its symplectic form is upgraded to a 
$K_2$-symplectic structure for which 
the special coordinates are $K_2$-Darboux coordinates. 

\end{footnotesize}

\end{abstract}

\section{Introduction} 

\paragraph{Dual pairs of moduli spaces related to $\G$-local systems.} We defined in \cite{FG1} a pair of 
moduli spaces ${\cal A}_{\G,{\Bbb S}}$ and ${\cal X}_{\G,{\Bbb S}}$  
closely related to the moduli space of $\G$-local 
systems on ${\Bbb S}$. 
We usually consider the ${\cal X}$-moduli space 
 for the adjoint group $\G$, and the ${\cal A}$-moduli space 
for its universal cover $\widetilde \G$,  using the notation 
$({\cal A}_{\widetilde \G,{\Bbb S}}, {\cal X}_{\G,{\Bbb S}})$ for the dual pair.

Each of the moduli spaces is equipped with a positive atlas, equivariant under the 
action of the mapping class group $\Gamma_\bS$ of $\bS$. This allows to 
define their manifolds of positive real points. 
The space 
 ${\cal X}_{PGL_2,{\Bbb S}}(\R_{>0})$ is identified with 
the  modified Teichm\"uller 
space, 
and  ${\cal A}_{{\Bbb S}L_2,{\Bbb S}}(\R_{>0})$ is identified with  Penner's
 decorated Teichm\"uller space of ${\Bbb S}$. 
Furthermore, 
their points with values in any semifield are well defined. 
The real tropical points of each of the spaces are identified with the appropriate modifications of 
Thurston's  measured laminations.  
For a group $\G$ of higher rank we get 
dual pairs of higher Teichm\"uller and  
lamination spaces.  

The moduli space ${\cal X}_{\G,{\Bbb S}}$ is equipped with a $\Gamma_\bS$-equivariant Poisson structure. 

The moduli space ${\cal A}_{\widetilde \G,{\Bbb S}}$ is equipped with a $\Gamma_\bS$-equivariant 
class in $K_2$. 
Notice that a $K_2$-class on a space ${\cal A}$  gives rise to a closed $2$-form on ${\cal A}$. 

There is a map 
$
p: {\cal A}_{\widetilde \G,{\Bbb S}} \lra {\cal X}_{\G,{\Bbb S}}
$ respecting these structures.  

\vskip 3mm

A variety of properties of these two moduli spaces is best explained by the fact that 
they admit $\Gamma_\bS$-equivariant cluster structures of two different types.

{\it Cluster $K_2$-varieties}, which we called before cluster ${\cal A}$-varieties, are geometric incarnations of  
cluster algebras of Fomin-Zelevinsky \cite{FZI}. 

{\it Cluster  Poisson varieties},  
called in \cite{FG2} cluster ${\cal X}$-varieties, 
are dual geometric objects. 

A dual pair $({\cal A}, {\cal X})$ of cluster varieties can be 
assigned to any quiver. One has a Langlands type involution 
${\cal A} \to {\cal A}^\vee, ~{\cal X} \to {\cal X}^\vee$ on cluster varieties. 
There is a deep duality between cluster varieties ${\cal A}^\vee$ and ${\cal X}$.

The space 
${\cal A}_{\widetilde \G,{\Bbb S}}$ (respectively ${\cal X}_{\G,{\Bbb S}})$ has 
a structure of $\Gamma_\bS$-equivariant cluster $K_2$- (respectively cluster Poisson) variety. 
This  means that there is a $\Gamma_{\Bbb S}$-equivariant 
collection of rational 
coordinate systems  of specific cluster nature on each of the spaces.  Dual pairs of moduli spaces 
$({\cal A}_{\widetilde \G, {{\Bbb S}}}, {\cal X}_{\G,{{\Bbb S}}})$ provide   
interesting examples of dual pairs of cluster varieties  
arising in geometry. 


\paragraph{The cluster symplectic double ${\cal D}$ and algebras of global q-difference operators.} 
In \cite{FG3} we defined a symplectic double of a 
cluster Poisson variety ${\cal X}$, called the {\it cluster symplectic variety ${\cal D}$}. 
Its symplectic form is a $K_2$-symplectic form. 

So one assigns now to a quiver a triple of cluster varieties $({\cal A}, {\cal X}, {\cal D})$:

\begin{itemize}
\item  A cluster $K_2$-variety ${\cal A}$;

\item A cluster Poisson variety ${\cal X}$;

\item A cluster $K_2$-symplectic variety ${\cal D}$ -- the symplectic double of the cluster Poisson variety. 
\end{itemize}

The cluster symplectic double ${\cal D}$ appears in the construction of quantization of 
the cluster Poisson variety ${\cal X}$. The quantised algebra of regular functions 
${\cal O}_q({\cal D})$ on ${\cal D}$ was realized 
in {\it loc. cit.} as the algebra of {\it global $q$-difference operators} 
on the cluster $K_2$-variety ${\cal A}$. 

Let us elaborate on this:  although we do not use 
it  in the paper, this shows how the quantum symplectic double 
appears, motivating the  goals of the paper. 
The reader might  skip this, and jump to  discussion of the moduli space ${\cal D}_{\G, \bS}$ 
on the next page. 

\vskip 3mm
One can think about the cluster symplectic double ${\cal D}$ 
as of a  cluster analog of the cotangent bundle to ${\cal A}$. This analogy 
can be seen as follows. 

The cotangent bundle $T^*M$ of a manifold $M$ is the quasiclassical limit 
of the algebra of differential operators on $M$. Precisely, the algebra of differential operators is filtered 
by the degree of an operator; the associate graded is the algebra of regular functions on $T^*M$. 

We define differential operators on a  $M$ locally. 
Namely, in the algebraic-geometric set-up, an algebraic variety $M$ is given by 
a collection of 
coordinate domains $U_i \subset \R^n$ and polynomial gluing maps $\varphi_{ij}: U_i \to U_j$. 
We define a global polynomial differential operator $D$ on $M$ as a collection 
of operators $\{D_i\}$ on the spaces of  functions 
${\cal O}(U_i)$, generated by infinitesimal translations and multiplication by linear functions in $\R^n$, such that the 
linear maps $\varphi_{ij}^*: {\cal O}(U_j) \to {\cal O}(U_i)$ induced by the gluing maps $\varphi_{ij}$ 
intertwine them: $\varphi_{ij}^*(D_jf) = D_i\varphi_{ij}^*(f)$.  

Similarly to this, a cluster variety ${\cal A}$ is glued from 
split algebraic tori ${\rm T}_i = (\C^*)^n$ via positive birational maps $\varphi_{ij}: {\rm T}_i \to {\rm T}_j$. 
On each cluster coordinate torus ${\rm T}_i$ we consider a non-commutative algebra of 
operators acting on functions,  generated by the following ones: 

\begin{itemize}

\item the operators 
  induced by the shift by $q$ of one of the coordinats: $z_i \lms qz_i$, 
and 

\item the operators of multiplication by the coordinates $z_i$. 

\end{itemize}

It is called the algebra of $q$-difference operators on 
${\rm T}_i$, and denoted by ${\cal D}_q({\rm T}_i)$. 

The crucial new idea  is that instead of the linear maps $\varphi_{ij}^*$ 
induced by the pointwise acting gluing maps $\varphi_{ij}$ we use 
certain linear maps between the spaces of functions, called the  {\it intertwiners}, 
which are not induced by any pointwise maps. 

Namely, we consider the Hilbert space ${\cal H}_i:= L^2(\R^n, \omega)$ 
associated to the real positive part $(\R^*_{>0})^n \subset (\C^*_{>0})^n$ of the coordinate torus ${\rm T}_i$, 
identified with $\R^n$ by the logarithm map, and  equip it with the 
Lebesgue measure $\omega = d\log x_1 ... d\log x_n$, where $x_i = \log z_i$. 

The algebra ${\cal D}_q({\rm T}_i)$ of the $q$-difference operators on the cluster torus 
${\rm T}_i$ acts on ${\cal H}_i$, or rather on a certain Schwarz subspace $ {\cal S}({\cal H}_j)$ of ${\cal H}_i$. 

We define,
 by using the (non-compact) \underline{quantum dilogarithm function}, a unitary operator
$$
{\rm I}_{ij}: {\cal H}_j \lra {\cal H}_i. 
$$
The key point is that there is  a unique 
birational map  of the fields of fractions 
of the algebras of $q$-difference operators on the cluster tori: 
\be \la{keyP1}
\psi^*_{ij}: {\rm Frac}({\cal D}_q({\rm T}_j)) \lra {\rm Frac}({\cal D}_q({\rm T}_i)),
\ee
such that the operator ${\rm I}_{ij}$ has the ``intertwining property'':
\be \la{keyP}
{\rm I}_{ij}(F f) = \psi_{ij}(F) (f), ~~~~ \forall f\in {\cal S}({\cal H}_j), ~~\forall F \in {\rm Frac}({\cal D}_q({\rm T}_j)). 
\ee 

This is similar to the isomorphism of the ring of polynomial differential operators in $\R^n$ 
induced by  the Fourier transform in $\R^n$. 
The intertwiner was defined in the  first arXive version of \cite{FG2}, and elaborated  in 
\cite{FG3}.  The property (\ref{keyP}) requires clarification since  it involves 
fractions of q-difference operators. 

\vskip 3mm
Summarising, the analog of the  algebra of differential operators on a variety is the algebra 
${\cal O}_q({\cal D})$ 
of  global $q$-difference operators 
on the cluster $K_2$-variety ${\cal A}$.  
It consists of q-difference operators $D_i\in {\cal D}_q({\rm T}_i)$ on each cluster torus, related by 
transformations (\ref{keyP1}): $\psi^*_{ij}(D_j) = D_i$. 

We expect that the concept of \underline{global} q-difference operators plays a key role 
in the theory of quantum group and its geometrisation, similar to the role of differential operators 
in the Lie group theory.  In particular, quantum groups can be realised 
by global q-difference operators. 
\vskip 3mm

The $q\to 1$ limit of the algebra ${\cal O}_q({\cal D})$ is the 
algebra of functions on the cluster symplectic double ${\cal D}$.  
The cluster symplectic double is glued from split algebraic tori ${\rm T}_i$ 
by  {\it cluster symplectic ${\cal D}$-transformations}, which are the $q\to 1$ limits of the 
non-commutative birational transformations (\ref{keyP1}). 
It turns out that the  formulas for them coincide with the formulas of Fomin-Zelevinsky 
defining mutations in cluster algebras with principal coefficients \cite{FZIV}. 
It is remarkable  that, although 
our approaches and motivations were different, we arrived, independently, 
to the same formulas. This connection deserves to be better understood. 

Although  the cluster symplectic double ${\cal D}$ is just a $q\to 1$ limit 
of a more fundamental non-commutative quantum object, its classical geometry 
is highly non-trivial. Our goal  is to establish this geometry 
in the case when  the cluster set-up is assigned to a pair $(\G, \bS)$. 

\paragraph{Symplectic double moduli space ${\cal D}_{\G, \bS}$.} 
It is natural to ask whether there is a $\Gamma_{\Bbb S}$-equivariant moduli space ${\cal D}_{\G,{\Bbb S}}$ related to the dual pair 
of moduli spaces
$({\cal A}_{ \widetilde \G,{\Bbb S}}, {\cal X}_{\G,{\Bbb S}})$ the same way as the cluster 
symplectic variety ${\cal D}$ is related to the dual pair $({\cal A}, {\cal X})$. 
In Section \ref{Sec5} we  
introduce such a symplectic moduli space.  
\vskip 3mm

The story goes as follows. Let $\bS_{\cal D}$ be the topological double of $\bS$. It is a 
topological surface obtained by gluing the decorated surface $\bS$ 
with its ``mirror'' $\bS^o$, given by the same surface 
with the opposite orientation, 
along the corresponding boundary components. The marked 
points on $\bS$ match under the gluing with the ones on $\bS^o$, 
and give rise to {\it punctures} on the double. 
So   $\bS_{\cal D}$ is a topological surface with punctures and without boundary,
 equipped with an orientation reversing involution $\sigma$ flipping $\bS$ and $\bS^o$.  
The  moduli space ${\cal D}_{\G, \bS}$ 
is a 
 relative of (a finite cover of) the
 moduli space of $\G$-local systems on the double $\bS_{\cal D}$. 

The main result of this paper is a construction of
 a cluster symplectic double atlas on the space ${\cal D}_{\G,\bS}$. It is 
a $\Gamma_\bS$-equivariant collection of special rational coordinate systems 
on the space ${\cal D}_{\G,\bS}$; 
different coordinate systems 
are related by cluster  symplectic ${\cal D}$-transformations 
for the cluster symplectic double of the space  ${\cal X}_{\G,\bS}$. This is a new 
construction even for $SL_2$. 

In the case when $\bS$ is a compact surface without boundary we have 
$$
{\cal D}_{\G, \bS} = {\rm Loc}_{\G, \bS} \times{\rm Loc}_{\G, \bS^o}. 
$$
Here 
${\rm Loc}_{\G, \bS} $ is the moduli space of $\G$-local systems on $\bS$. 
It is already symplectic.
This is the only case when there are no coordinates on 
${\cal D}_{\G, \bS}$ or ${\rm Loc}_{\G, \bS} $, and  
we have nothing new to say.

In general the definition of the moduli space ${\cal D}_{\G, \bS}$ is somewhat subtle: 
 it contains a closed subvariety whose points do not parametrise any kind of 
local systems 
on $\bS_{\cal D}$.

\paragraph{Key features of the symplectic moduli space ${\cal D}_{\G,\bS}$.} 
There is a 
Poisson map
\begin{equation} \label{tyu}
\pi: {\cal D}_{\G,\bS} \lra {\cal X}_{\G,\bS}\times {\cal X}_{\G,\bS^o},
\end{equation}
and an
 involution $i$ of ${\cal D}_{\G,\bS}$ interchanging the two projections in (\ref{tyu}). 

There is an embedding $j:  {\cal X}_{\G,\bS} \hra {\cal D}_{\G,\bS}$. Its image is Lagrangian.  
 The following diagram, where $\Delta_{\cal X}$ is the diagonal 
in ${\cal X}_{\G,\bS}\times {\cal X}_{\G,\bS^o}$, is commutative: 
\begin{equation} \label{8:40}
\begin{array}{ccc}
{\cal X}_{\G,\bS}&\stackrel{j}{\hra} &{\cal D}_{\G,\bS} \\
\downarrow && \downarrow \pi\\
\Delta_{\cal X}&\stackrel{}{\hra} & {\cal X}_{\G,\bS}\times {\cal X}_{\G,\bS^o}
\end{array}
\end{equation}
The pair   $({\cal X}_{\G,\bS}, {\cal D}_{\G,\bS})$ with the maps $i, j, \pi$ 
has a symplectic groupoid structure \cite{W} 
related to the Poisson space ${\cal X}_{\G,\bS}$, where ${\cal D}_{\G,\bS}$ 
is the space of morphisms, and ${\cal X}_{\G,\bS}$ is the space of objects. 

There is a map respecting the closed 2-forms 
\be \la{MPH}
\varphi: {\cal A}_{\widetilde \G, \bS}\times {\cal A}_{\widetilde \G, \bS^o} \lra 
{\cal D}_{\G,\bS}.
\ee

The symplectic double ${\cal D}_{\G,\bS}$ sits in a commutative $\Gamma_\bS$-equivariant 
diagram: 
\begin{equation} \label{11.5.06.1}
\begin{array}{ccc}
{\cal A}_{\widetilde \G,\bS}\times {\cal A}_{\widetilde \G,\bS^o} && \\
&\searrow \varphi &\\
p \times p^o\downarrow  && {\cal D}_{\G,\bS}\\
&\swarrow \pi &\\
{\cal X}_{\G,\bS}\times {\cal X}_{\G,\bS^o} & & 
\end{array}
\end{equation}

The  existence of a $\Gamma_\bS$-equivariant 
positive atlas on the space ${\cal D}_{\G,\bS}$ implies that there  is  a 
$\Gamma_\bS$-equivariant   symplectic space 
${\cal D}_{\G,\bS}(\R_{>0})$ of its positive points. It is isomorphic to  $\R^{-2\chi(\bS){\rm dim}\G}$.

\vskip 3mm
The cluster ${\cal D}$-coordinates allow to produce a $\ast$-algebra 
${\cal O}_q({\cal D}_{\G, \bS})$ - a non-commutative $q$-deformation of the algebra 
${\cal O}({\cal D}_{\G, \bS})$ of regular functions on the moduli space 
${\cal D}_{\G, \bS}$.

There is a canonical class in $K_2$ of the moduli space 
${\cal D}_{\G, \bS}$ providing the symplectic form. 
It gives rise to a canonical geometric quantisation line bundle with 
connection $({\cal L}, \nabla)$ on the moduli space 
${\cal D}_{\G, \bS}$, whose curvature is the symplectic form. 

\paragraph{${\cal D}$-laminations \cite{A}.} 
Dylan Allegretti in his Yale Thesis \cite{A} defined {\it integral ${\cal D}$-laminations} 
on decorated surfaces and proved the following results. Denote by ${\cal D}_L(\bS; \Z)$ the set 
of integral ${\cal D}$-laminations on $\bS$. Then given an ideal triangulation 
$T$ of $\bS$, there is an isomorphism of sets 
$$
a_T: {\cal D}_L(\bS; \Z) \stackrel{\sim}{\lra} (\Z \times \Z)^{\{\mbox{\rm edges of $T$}\}}. 
$$ 
Given a flip $T\to T'$ of ideal triangulations,  the tropicalisation of the cluster symplectic double 
transformations (given by formulas (\ref{zx1qt})-(\ref{4.28.03.11x})) is a piecewise linear 
map 
$$
\mu^t_{T \to T'}: (\Z \times \Z)^{\{\mbox{\rm edges of $T$}\}} \lra (\Z \times \Z)^{\{\mbox{\rm edges of $T'$}\}}.
$$ 
One proves that it 
intertwines the isomorphisms $a_T$ and $a_{T'}$, providing a commutative diagram:
$$
\begin{array}{ccc}
&&(\Z \times \Z)^{\{\mbox{\rm edges of $T$}\}} \\
&\nearrow a_T&\\
{\cal D}_L(\bS; \Z) &&\downarrow \mu^t_{T \to T'}\\
&\searrow a_{T'}&\\
&&(\Z \times \Z)^{\{\mbox{\rm edges of $T'$}\}}
\end{array}
$$
This just means that there is a 
$\Gamma_\bS$-equivariant isomorphism of sets 
$$
A_{\Z^t}: {\cal D}_L(\bS; \Z) \stackrel{\sim}{\lra} {\cal D}_{PGL_2, \bS}(\Z^t).
$$

Allegretti defined {\it measured ${\cal D}$-laminations on $\bS$}, and proved that 
there is a similar isomorphism 
between the set ${\cal D}_L(\bS; \R)$ of all measured ${\cal D}$-laminations on $\bS$ and the set 
${\cal D}_{PGL_2, \bS}(\R^t)$ of the real tropical points of 
${\cal D}_{PGL_2, \bS}$:
 $$
A_{\R^t}: {\cal D}_L(\bS; \R) \stackrel{\sim}{\lra} {\cal D}_{PGL_2, \bS}(\R^t).
$$

\paragraph{${\cal D}$-laminations and canonical pairings.} 
Each integral ${\cal D}$-lamination $l$ on $\bS$ gives rise to 
a positive rational function on the space ${\cal D}_{PGL_2, \bS}$:
\be \la{AF}
{\Bbb I}_{\cal D}(l) \in {\cal O}({\cal D}_{PGL_2, \bS}).  
\ee

The collection of  functions (\ref{AF}) can be viewed as a canonical 
$\Gamma_{\bS}$-equivariant pairing 
$$
{\Bbb I}_{\cal D}: {\cal D}_L(\bS; \Z) \times  {\cal D}_{PGL_2, \bS}(\R_{>0}) \lra \R.
$$

Its renormalised version gives rise to  a $\Gamma_{\bS}$-equivariant pairing 
$$
{\Bbb I}^\R_{\cal D}: {\cal D}_L(\bS; \R) \times  {\cal D}_{PGL_2, \bS}(\R_{>0}) \lra \R. 
$$

The tropicalisation of this pairing is well defined, providing a $\Gamma_{\bS}$-equivariant map 
$$
{\Bbb I}^t_{\cal D}: {\cal D}_L(\bS; \R) \times  {\cal D}_{PGL_2, \bS}(\R^t) \lra \R. 
$$
It coincides with the natural intersection pairing  on ${\cal D}$-laminations \cite{A}. 

So the pairing ${\Bbb I}_{\cal D}$ looks similar to the canonical pairings introduced 
in \cite[Section 12]{FG1}. Yet the functions 
${\Bbb I}_{\cal D}(l)$ do not form a canonical basis: they are rational rather then regular. 

The functions 
${\Bbb I}_{\cal D}(l)$  have nice properties. In particular, the ones assigned to  
integral ${\cal D}$-laminations without loops  
are expressed via ratios of products of 
 Fomin-Zelevinsky F-polynomials \cite{A}. This suggests that for any cluster symplectic double ${\cal D}$ 
one should have a canonical pairing 
$$
{\Bbb I}_{\cal D}: {\cal D}(\Z^t) \times  {\cal D}(\R_{>0}) \lra \R
$$
It remains to be seen how 
this story fits into the framework of canonical bases. 

\paragraph{Organization of the paper.} 
We define the moduli space ${\cal D}_{\G, \bS}$  and establish its  
first properties in Section \ref{Sec5}. We introduce the coordinates in the $SL_2$ case in Section \ref{Sec2}. 
We made an effort to make the paper accessible to geometers. So a geometrically inclined 
reader can skip Section 2, and go to Section \ref{Sec2} where we
   emphasize 
connection with the geometry and Teichmuller theory. 
We start in Section 
\ref{Sec2.1} from the case when $\bS$ is a disc with $m$ points on the boundary.  
We consider in Section \ref{Sec2.2} the case of a surface with 
holes without marked points (and in fact a  more general set-up of a 
surface with a simple lamination). 
The general case is a mixture of these two.  In Section \ref{Sec3} we 
consider the moduli space ${\cal D}_{\G, \bS}$ for $SL_2$. 

In Section \ref{Sec4} we define the coordinates for any 
group $\G$.

In Section \ref{Sec6} we recall 
the definition of the quantum double and its classical counterpart, 
the symplectic double, borrowing from \cite{FG3}. We start from the quantum story since 
it is the simplest way to get the formulas in the classical setting. 
To simplify the exposition, we restrict 
in Section \ref{Sec6} to the simply-laced case. 
The general case is discussed in {\it loc. cit.}.

\paragraph{Remark.} The first draft of this paper originally appeared as the last Section of the 
arXive preprint  arXiv:math/0702397, later published in \cite{FG3} without that Section.  

\paragraph{Acknowledgments.} 
A.G. was 
supported by the  NSF grants  DMS-1059129 and DMS-1301776. 
The first version of this paper appeared as Section 7 of \cite{FG3}.  A.G. 
is grateful to IHES where this paper has mostly been written 
for the hospitality and support. His work at IHES was supported by the National Science Foundation 
under Grant No. 1002477.  We are grateful to the referee 
for useful comments. 

\section{The symplectic double moduli spaces}\la{Sec5}

In Section \ref{Sec5.1} we introduce a moduli space  
${\cal D}^*_{\G, \bS}$. 
To define the special coordinates it is sufficient to deal 
with it. 
However this space is only an approximation to the right
moduli space ${\cal D}_{\G,\bS}$. One of the reasons is that we should have a natural map 
$\varphi$, see (\ref{MPH}). 
Its cluster analog is a key feature of the cluster symplectic double. 
However, as we see for $\G=SL_2$ in  Section \ref{Sec2}, 
 there is no such a map even for the spaces of positive points:
$$
{\cal A}_{\widetilde \G,S}(\R_{>0})\times {\cal A}_{\widetilde \G,S^o}(\R_{>0}) \not\lra 
{\cal D}^*_{\G,S}(\R_{>0}).
$$

Assuming that $\bS=S$ has no marked points, we define in Section \ref{Sec5.2}
 a  moduli 
space ${\cal D}_{\G,S}$ equipped with  a map (\ref{MPH}). 
Theorem \ref{EMBED} provides a birational isomorphism
$
{\cal D}^*_{\G,S} \stackrel{\sim}{\to} {\cal D}_{\G,S}.
$

\subsection{The moduli space ${\cal D}^*_{\G, \bS}$}  \la{Sec5.1}

\paragraph{Flag variety.} 
The flag variety ${\cal B}$ for $\G$ parametrises all Borel subgroups in $\G$. 
It is  isomorphic 
to $\G/B$ where $B$ is a Borel subgroup of $\G$. 
A $\G$-local system ${\cal L}$ on 
a space gives rise to  
the associated local system of  
flag varieties ${\cal L}_{\cal B}:= {\cal L}\times_\G{\cal B}$.   

\paragraph{Decorated flags.} 
 Let $U$ be a  maximal unipotent subgroup of $\G$. 
The {\it decorated flag variety} ${\cal A}_\G$, 
also known as the principal affine space for $\G$,  
is isomorphic to 
$\G/U$. 
A $\G$-local system ${\cal L}$ gives rise to the associated {\it decorated flag local system ${\cal L}_{\cal A}$}:
$$
{\cal L}_{\cal A}:= 
{\cal L}\times_\G {\cal A}_\G \stackrel{\sim}{=} {\cal L}/U.
$$ 
There is a canonical projection $ 
{\cal L}_{\cal A} \to {\cal L}_{\cal B}$. 

 The {\it configuration space of $n$ decorated flags} is defined by 
$$
{\rm Conf}_n({\cal A}_\G):= 
\G\backslash {\cal A}_\G^n. 
$$  
The Cartan group $H$ of $\G$ 
acts on ${\cal A}_\G$ from the right. So 
the group $H^n$ acts on  ${\rm Conf}_n({\cal A}_\G)$.

When the group $\G$ is simply-connected, 
a positive structure on the space 
${\rm Conf}_n({\cal A}_\G)$ was defined in Section 8 of \cite{FG1}. 
The space ${\rm Conf}_n({\cal A}_\G)$ has a structure of the cluster 
${\cal A}$-variety. The case when $\G=SL_m$ is 
described in Section 10 of {\it loc. cit}.

\paragraph{Twisted $\G$-local systems.} 
Denote by  $s_{ \G}$ a central element in $\G$  
given by the image of $-e$ under a principal embedding $SL_2 \to \G$. 
For example, if $\G = SL_m$, then  $s_{\G} = (-1)^me$. 

Let $T'_S$ be the punctured at the zero section tangent bundle of $S$. 
A {\it twisted $\G$-local system on $S$} is a  
$\G$-local system on $T'_S$ with the monodromy 
$s_{\G}$ along a simple loop around the origin in a tangent space to a point of $S$.

\paragraph{The moduli space ${\cal D}^*_{\G,\bS}$.} Let $\bS$ be an arbitrary 
decorated surface with $k> 0$ holes $h_i$. 
We glue it to its mirror $\bS^o$, matching the corresponding pairs of marked points. We get
 a new topological surface  $\bS'_{\cal D}$, equipped 
with  a set of the glued marked points. 
Deleting them  
we arrive at the topological double 
$\bS_{\cal D}$ of $\bS$:
$$
\bS_{\cal D}:= \bS'_{\cal D}- \{\mbox{\rm glued marked  points}\}.
$$ 
We call the deleted points {\it punctures}. 
  The surface $\bS_{\cal D}$ carries an unbounded  simple lamination $\gamma$, 
called the {\it  neck lamination},   
consisting of boundary components of $\bS$ and $\bS^o$ glued together, 
minus the punctures. It is a union of circles and open segments, whose endpoints 
 contain all punctures of $\bS_{\cal D}$.

Let $\sigma: {\bS}_{\cal D} \to {\bS}_{\cal D}$ be the  
involution interchanging the two halves 
of the double. 
Let ${\rm C}_{\widetilde \G}$ be the center of the group $\widetilde \G$. Let 
us consider a subgroup 
\be \la{subgrd}
\Delta_{\widetilde \G} \subset 
{\rm Hom}(H_1({\bS}_{\cal D}, \Z), 
{\rm C}_{\widetilde \G})
\ee
of all maps $f \in {\rm Hom}(H_1({\bS}_{\cal D}, \Z), {\rm C}_{\widetilde \G})$ 
 invariant under the action of  the involution $\sigma$. 
The group ${\rm Hom}(H_1({\bS}_{\cal D}, \Z), 
{\rm C}_{\widetilde \G})$, and hence its subgroup $\Delta_{\widetilde \G} $,
 act on the twisted $\widetilde \G$-local systems on $\bS_{\cal D}$.

\bd The moduli space ${{\cal D}^*}'_{\G,\bS}$ parametrises pairs $({\cal L}, \beta)$ 
where ${\cal L}$ is a twisted $\widetilde \G$-local 
system on $\bS_{\cal D}$ with unipotent monodromies around the punctures, and 
a framing $\beta$ given by: 

\begin{itemize}

\item  a collection of flat sections 
of the associated flag local system ${\cal L}_{\cal B}$:  near each of the punctures, and 
over each  loop of the 
neck lamination $\gamma$. 

\end{itemize}

The moduli space ${\cal D}^*_{\G,\bS}$ is the quotient of  ${{\cal D}^*}'_{\G,\bS}$ 
by the action of the group $\Delta_{\widetilde \G} $:
$$
{\cal D}^*_{\G,\bS}:= {{\cal D}^*}'_{\G,\bS}/\Delta_{\widetilde \G} .
$$ 
\ed

\subsection{The moduli space ${\cal D}_{\G, S}$ and its first properties} \la{Sec5.2}

Below we assume that the decorated surface $\bS$ has no marked points on the boundary. 
So it is just an oriented surface $S$ with boundary. 

Recall that $\widetilde \G$ denotes a simply-connected split algebraic group over $\Q$, and 
$\G$ is its adjoint group. 
Let us recall the definition of the moduli space 
${\cal A}_{\widetilde \G, S}$ and ${\cal X}_{\G, S}$ . 

\begin{definition} [\cite{FG1}] 
i) A decoration on a twisted $\widetilde \G$-local system ${\cal L}$ on $S$ is 
a locally constant section $\alpha_{\cal L}$ of the restriction 
${\cal L}_{\cal A}|_{\partial S}$ of the 
decorated flag bundle ${\cal L}_{\cal A}$ to the boundary $
\partial S$ of $S$. 
The moduli space ${\cal A}_{ \widetilde \G,S}$ parametrises decorated 
twisted $\G$-local system on $S$. 

ii) The moduli space ${\cal X}_{ \G,S}$ parametrises $\G$-local systems ${\cal L}$ on $S$ 
equipped with a 
framing -- a flat section 
of the associated flag local system ${\cal L}_{\cal B}$ over the 
 boundary. 
\end{definition}

There is a canonical map $p: {\cal A}_{\widetilde \G, S} \to {\cal X}_{\G, S}$, 
obtained by forgetting the 
decoration and pushing a twisted 
$\widetilde \G$-local system to a $\G$-local system on $S$.

A framing $\beta$ on a $\G$-local system ${\cal L}$ on $S$ is the same thing as 
a $H^k_\G$-local subsystem, $k=\pi_0(\gamma)$, given by
 the preimage of 
$\beta$ under the map ${\cal L}_{\cal A}|_{\partial S} \to {\cal L}_{\cal B}|_{\partial S}$: 
\be\la{7.18.14.1}
{\cal F}_\beta \subset {\cal L}_{\cal A}|_{\partial S}.
\ee

\bd\label{2.16.06.2}
i) The moduli space ${\cal D}'_{\G,S}$ parametrises  
{\rm gluing data} $({\cal L}_{\pm}, {\beta}_{\pm}, \alpha)$, where: 

\begin{enumerate}

\item $({\cal L}_{\pm }, \beta_\pm)$ are twisted framed 
 $\widetilde \G$-local systems on $S$ and $S^o$ 
with isomorphic restrictions to the boundaries.

\item  A $H^k_{\widetilde \G}$-equivariant map of local subsystems (\ref{7.18.14.1}) 
assigned to the framings $\beta_+$ and $\beta_-$: 
$$
\alpha: 
{\cal F}_{{\beta_+}} \to {\cal F}_{{\beta_-}}.
$$ 
\end{enumerate}

ii) The moduli space ${\cal D}_{\G,S}$ is the quotient of 
${\cal D}'_{\G,S}$ by the action of the group 
$\Delta_{\widetilde \G}$. 
\ed

\paragraph{Properties of the moduli space ${\cal D}_{\G, S}$.} 
Here are some properties of the moduli space ${\cal D}_{\G, S}$ 
matching similar properties of the cluster symplectic double. 

\paragraph{i)} A point $p \in {\cal X}_{\widetilde \G,S}$ determines its mirror image, 
$p^o \in 
{\cal X}_{\widetilde \G, S^o}$. 
Equipping the pair $(p, p^o)$ with the tautological  gluing data, we arrive at 
a point of  ${\cal D}'_{\G,S}$. 
So we get an embedding
$
j': {\cal X}_{\widetilde \G,S} \hra {\cal D}'_{\G,S}. 
$ 
By the very definition, 
$$
{\cal X}_{\G, S} = {\cal X}_{\widetilde \G, S}/{\rm Hom}(\pi_1( S), {\rm C}_\G), ~~~~
{\cal D}_{\G, S} = {\cal D}'_{ \G, S}/{\rm Hom}(\pi_1( S), {\rm C}_\G).
$$
Therefore the embedding $j'$ provides an embedding
$$
j: {\cal X}_{ \G, S} \hra {\cal D}_{\G, S}. 
$$

Next, there is a natural restriction map 
$$
\pi: {\cal D}_{\G, S} \lra {\cal X}_{\G, S}\times {\cal X}_{\G, S^o}.  
$$

So we arrive at a commutative diagram
similar to (\ref{8:40}): 
$$
\begin{array}{ccc}
{\cal X}_{\G, S}&\stackrel{j}{\hra} &{\cal D}_{\G, S} \\
\downarrow && \downarrow \pi\\
\Delta_{{\cal X}_{\G, S}}&\stackrel{}{\hra} & {\cal X}_{\G, S}\times {\cal X}_{\G, S}^{\rm op}
\end{array}
$$

\paragraph{ii)}  There are canonical maps 
$$
\varphi: {\cal A}_{\widetilde \G,  S} \times {\cal A}_{\widetilde \G,  S^o} \lra {\cal D}'_{\G,  S}\lra {\cal D}_{\G,  S}.
$$
Namely, a pair $$
({\cal L}_+, \alpha_{+}) \in 
{\cal A}_{\widetilde \G,  S}, ~~~ ({\cal L}_-, \alpha_{-})\in 
{\cal A}_{\widetilde \G,  S^o}
$$ 
produces a gluing data 
$({\cal L}_\pm, {\beta}_\pm, \alpha)\in {\cal D}'_{\G,  S}$, where the framings 
$(\beta_{+}, \beta_{-})$ are
the images of the decorations $(\alpha_{+}, \alpha_{-})$, and we set 
$\alpha(\alpha_{+}):=\alpha_{-}$. The second map is the canonical projection.


\paragraph{iii)} Forgetting the framing $\beta$, we get a projection 
to the moduli space of twisted $\widetilde \G$-local system on  
$ S$: 
\begin{equation} \label{9:36}
{\rm pr}: {\cal D}_{\G, S} \lra {\cal L}_{\widetilde \G, S}. 
\end{equation}
It is a Galois cover over the generic point with the Galois group $W^k$,  
where $W$ is the Weyl group of $\G$, and $k=\pi_0(\gamma)$.  
The space ${\cal L}_{\widetilde \G, S}$ is
symplectic. It provides 
a $\Gamma_ S$-invariant symplectic structure on an open part of 
${\cal D}_{\G, S}$.

\paragraph{iv)} Our next goal is the following
\bt \la{EMBED} There is a canonical embedding, which is a birational isomorphism: 
$$
{\cal D}^*_{\G, S}\hra {\cal D}_{\G, S}.
$$
\et
To define it, we need a digration on cutting and gluing of some moduli spaces. 

\subsubsection{The moduli space ${\cal X}_{\G, S; \gamma}$} \la{SecX} 
Let us introduce a moduli space 
related to the pair $( S; \gamma)$, where $\gamma$ is a simple lamination on a 
surface $ S$. 

\begin{definition} \label{9:38}
The moduli space ${\cal X}_{\G,  S; \gamma}$ 
parametrises pairs $({\cal L}, \beta)$ 
where ${\cal L}$ is a twisted framed 
$\G$-local system on $ S$ and  a  framing $\beta$ is 
a flat section of the flag bundle 
${\cal L}_{\cal B}$ over the  curve $\gamma$ and the boundary 
$\partial  S$. 
The pair $({\cal L}, \beta)$ is called a twisted framed 
$\G$-local system on $( S; \gamma)$.
\end{definition}

{\bf Example}. 
When $\gamma$ is empty we recover the moduli space 
${\cal X}_{\G, S}$. 

\vskip 3mm

Let $ S-\gamma$ be the surface obtained by cutting $ S$ along the curve $\gamma$.  
Restricting a framed  $\G$-local system on $( S; \gamma)$ to 
$ S-\gamma$ we get the restriction map 
\begin{equation} \label{9:45}
{\rm Res}: {\cal X}_{\G, S; \gamma} \lra {\cal X}_{\G,  S-\gamma}. 
\end{equation} 
It is not a map onto: the  monodromies 
around the matching  boundary components $\gamma_{\pm, i}$ 
of the surface $ S - \gamma$ must coincide. 
\begin{definition} \label{9:38as}
${\cal X}^{\rm red}_{\G, S - \gamma}$ the subspace of ${\cal X}_{\G, S - \gamma}$ 
determined by the condition that the monodromies 
around the loops $\gamma_{\pm, i}$ coincide for every $i$. 
\end{definition}

The following  result is an
 algebraic-geometric version of the cutting and 
gluing techniques  developed in Section 7 of \cite{FG1}, 
in particular Theorem 7.6 there. Observe that 
in {\it loc. cit.} we established cutting and 
gluing properties of the Teichm\"uller space 
${\cal X}_{\G, S}(\R_{>0})$, while their  algebraic-geometric analog 
requires  consideration of the 
moduli space ${\cal X}_{\G, S; \gamma}$. 

\begin{theorem} \label{11.11.06.2}
Let us assume that the center of $\G$ is trivial. Then the restriction map 
\be \la{res}
{\rm Res}: {\cal X}_{\G,  S; \gamma} \lra {\cal X}^{\rm red}_{\G,  S - \gamma} 
\ee
is a fibration over the generic point of 
${\cal X}^{\rm red}_{\G,  S - \gamma}$ with the structure group $H_\G^{\pi_0(\gamma)}$. 
The space ${\cal X}_{\G,  S; \gamma}$ is rational. 
\end{theorem}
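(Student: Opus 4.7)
The strategy is to analyze the fibers of $\mathrm{Res}$ geometrically and then deduce rationality by combining rationality of the base with rationality of the fiber.

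First I would fix a generic point $[(\mathcal{L}', \beta')]\in\mathcal{X}^{\mathrm{red}}_{\G,\,S-\gamma}$ and describe the set of lifts to $\mathcal{X}_{\G,\,S;\,\gamma}$. A lift amounts to gluing $\mathcal{L}'$ across each of the $k:=\pi_0(\gamma)$ components $\gamma_i$ of $\gamma$. The gluing datum for $\gamma_i$ is an isomorphism between the framed restrictions of $\mathcal{L}'$ on the two matching boundary circles $\gamma_{+,i}$ and $\gamma_{-,i}$. After choosing an auxiliary basepoint and any one conjugator, such gluing data form a torsor over the centralizer $Z_\G(\mu_i)$ of the common monodromy $\mu_i$ (the reduction condition guarantees that $\mu_i$ is well-defined up to conjugation). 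For generic $(\mathcal{L}',\beta')$ the element $\mu_i$ is regular semisimple, so $Z_\G(\mu_i)\cong H$.

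Next I would incorporate the framing constraint. A framing on the glued circle $\gamma_i$ is a $\mu_i$-fixed flag, i.e.\ a Borel containing $Z_\G(\mu_i)$; there are $|W|$ of these, forming a single $W$-orbit. For the gluing to produce a \emph{framed} local system on $(S;\gamma)$, it must match $\beta'|_{\gamma_{+,i}}$ with $\beta'|_{\gamma_{-,i}}$ after identification. This is a discrete matching condition on the $W$-orbit; the image of $\mathrm{Res}$ sits over the open stratum where the condition holds. Once the two flags coincide, the subgroup of $Z_\G(\mu_i)=H$ preserving them is still all of $H$, since $H$ lies in every Borel containing it. Thus the framing-preserving gluings along $\gamma_i$ form an $H$-torsor. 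The hypothesis that the center of $\G$ is trivial guarantees that the resulting $H^k$-action on lifts is free (no nontrivial global automorphism can cancel a nontrivial twist), so the generic fiber of $\mathrm{Res}$ is genuinely an $H_\G^{\pi_0(\gamma)}$-torsor.

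For rationality of $\mathcal{X}_{\G,\,S;\,\gamma}$, I would first reduce to rationality of $\mathcal{X}^{\mathrm{red}}_{\G,\,S-\gamma}$: since the fiber $H_\G^{k}$ is a split torus, and any torsor over a rational base under a split torus is rational (it admits a rational section, after which the total space becomes birational to the product), the total space is rational as soon as the base is. Rationality of $\mathcal{X}^{\mathrm{red}}_{\G,\,S-\gamma}$ can then be obtained by iterating the same cutting procedure: cut $S-\gamma$ further along a maximal system of disjoint simple curves until each piece is an elementary surface whose framed moduli space admits an explicit rational description (e.g.\ configurations of three framed flags in $\G$ for a thrice-punctured sphere, as in \cite{FG1}). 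The restriction maps at each stage are torus fibrations of the same kind over their appropriate reduced targets, so rationality propagates upward.

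The main obstacle, in my view, is organizing the \emph{framing} bookkeeping cleanly: showing that the discrete matching condition between $\beta'|_{\gamma_{+,i}}$ and $\beta'|_{\gamma_{-,i}}$ is really imposed by the definition of the image of $\mathrm{Res}$ (and does not shrink one below the generic locus of $\mathcal{X}^{\mathrm{red}}_{\G,\,S-\gamma}$), and simultaneously verifying that triviality of the center of $\G$ is used exactly once, to free up the $H^k$-action. Everything else is an algebro-geometric transcription of the Teichm\"uller cutting-and-gluing developed in \cite[Section~7, Theorem~7.6]{FG1}.
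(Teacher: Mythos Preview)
Your proposal is correct and follows exactly the approach the paper indicates: the paper's own proof is a one-line deferral to \cite[Theorem~7.6]{FG1}, and what you have written is precisely the algebro-geometric version of that cutting-and-gluing argument, with the $H^{\pi_0(\gamma)}$-torsor structure of the gluings, the use of center-triviality for freeness, and the inductive rationality all spelled out. The framing-matching subtlety you flag as the main obstacle is real but is absorbed once ``monodromies coincide'' in the definition of ${\cal X}^{\rm red}_{\G,S-\gamma}$ is read as equality of the \emph{framed} monodromies in $H$ (the natural reading for framed local systems), so it does not shrink you below the generic locus.
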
 

\begin{proof} 
Follows the proof of Theorem 7.6 in {\it loc. cit.}. 
\end{proof}

The restriction map  (\ref{9:45})  is a Poisson map, which admits the following alternative description. 
The monodromies 
along the connected components of $\gamma$ provide a map 
$$
\mu_{\gamma}: {\cal X}_{\G, S; \gamma} \lra H^{k}, \qquad k= \pi_0(\gamma).
$$ 
The lifts of characters of the torus $H^{k}$ commute under the Poisson  
bracket on  ${\cal X}_{\G, S; \gamma}$. 
They provide a Hamiltonian action of the group 
$H^{k}$ on  ${\cal X}_{\G, S; \gamma}$. 
The corresponding Hamiltonian reduction map is the map (\ref{9:45}).

\paragraph{Proof of Theorem \ref{EMBED}.} 
By assigning to a twisted framed $\widetilde \G$-local system on $S_{\cal D}$ 
its restrictions to $S$ and $S^o$ we get an injective map 
\be \la{ORIM}
{{\cal D}'}^*_{\G, S} 
\hra {\cal D}'_{\G, S}.
\ee
 Taking the quotients by the action of the subgroup $\Delta_{\widetilde \G}$, we get an 
 injective map 
$
{\cal D}^*_{\G, S}\hra {\cal D}_{\G, S}.
$ 

One defines a version ${\cal D}^\sharp_{\G, S}$ 
of the moduli space 
${\cal D}'_{\G, S}$ by replacing in Definition \ref{2.16.06.2} the simply-connected group $\widetilde \G$ 
by any split semi-simple group $\G$. Then one sees the following:

1) Recall that $\G'$ denotes the adjoint group of $\widetilde \G$. 
Then  one has
$$
{\cal D}^\sharp_{\widetilde \G, S} \stackrel{\mbox{Def}}{=} {\cal D}'_{\G, S}, ~~~~
{\cal D}^\sharp_{\G', S} \stackrel{\mbox{Th. \ref{11.11.06.2}}}{\sim} {\cal X}_{\G', S; \gamma}.
$$
Here the second map is a birational isomorphism by Theorem \ref{11.11.06.2}. 

2) The space ${\cal D}^\sharp_{\G', S}$ is the quotient of the space 
${\cal D}'_{\G, S}$ by the subgroup
$$
\Delta^\sharp_{\widetilde \G}\subset {\rm Hom}(\pi_1(S), {\rm C}_{\widetilde \G})\times 
{\rm Hom}(\pi_1(S^o), {\rm C}_{\widetilde \G})
$$
consisting of all pairs of maps $(f, f^o)$ which agree on the 
subgroup $\pi_1(\partial S)\subset \pi_1(S)$. Evidently, 
\be \la{CANISOm}
{\rm Hom}(\pi_1(S_{\cal D}), {\rm C}_{\widetilde \G}) = \Delta^\sharp_{\widetilde \G}.
\ee

3) The space ${\cal X}_{\G', S; \gamma}$ is naturally birationally isomorphic to 
the quotient ${{\cal D}'}^*_{\G, S}/{\rm Hom}(\pi_1(S_{\cal D}), {\rm C}_{\widetilde \G})$.

Using isomorphism (\ref{CANISOm}), the map (\ref{ORIM}) provides  a map 
$$
{{\cal D}'}^*_{\G, S}/{\rm Hom}(\pi_1(S_{\cal D}), {\rm C}_{\widetilde \G}) 
\hra {\cal D}'_{\G, S}/\Delta^\sharp_{\widetilde \G}.
$$
Using 1) and 2) it is interpreted as a map:
$
{\cal X}_{\G', S; \gamma} 
\lra {\cal D}^\sharp_{\G', S} \sim {\cal X}_{\G', S; \gamma}.
$
Since it  is tautologically a  birational isomorphism, the 
map (\ref{ORIM}) is also a birational isomorphism.

\section{Cluster coodinates on the symplectic double for $SL_2$} \la{Sec2}

\subsection{The symplectic double of the space of positive 
configurations of points on $\R\PP^1$} \la{Sec2.1}

A collection of $m$ distinct points $(p_1, ..., p_m)$ on an oriented circle is {\it positive} if 
its order is compatible with the orientation of the circle.  Denote by ${\rm Conf}^+_m({\R\PP^1})$ the moduli space of 
positive configurations of $m$ points on the circle modulo the action of the group $PSL_2(\R)$. 
We denote by ${\rm Conf}^-_m(\R\PP^1)$ a similar space of the {\it negative} configurations of points on the circle -- 
a configuration is negative if reversing its order we get a positive configuration. 

The moduli space ${\rm Conf}^+_m({\R\PP^1})$ parametrises ideal geodesic $m$-gons. Indeed, we identify  
the oriented 
$\R\PP^1$ with the boundary of the oriented hyperbolic disc, and assign to a configuration of points $(p_1, ..., p_m)$ on 
$\R\PP^1$ the ideal geodesic $m$-gon with vertices at these points. 

Pick horocycles $h_1, ..., h_m$ at the vertices $p_1, ..., p_m$ of an ideal geodesic $m$-gon. 
For any two horocycles $h_i, h_j$ there is a number  $l(h_i, h_j)$ -- the distance between them 
along the geodesic  connecting $p_i $ and $p_{j} $.  
Namely, if the discs bounded by the horocycles are disjoint, it is the length of the geodesic segment  
between them. Otherwise it is its negative.

When $m=2n$ is even, there is a map, which we call the {\it Casimir map}:
$$
C: {\rm Conf}^+_{2n}(\R\PP^1) \lra \R.
$$
Namely, set 
\be \la{2.1.12.1}
C(p_1, ..., p_{2n}):= l(h_1, h_2)  - l(h_2, h_3) + \ldots + l(h_{2n-1}, h_{2n})  - l(h_{2n}, h_1). 
\ee
It does not depend on the choices of the horocycles. Alternatively, for every vertex $p_i$ 
there is a map sending a geodesic $p_{i-1}p_{i}$ to the geodesic $p_{i+1}p_{i}$. It assigns to every point $c\in p_{i-1}p_{i}$ the intersection of the horocycle passing through $c$ and centered at $p_i$ with the geodesic $p_{i+1}p_{i}$. The composition of these maps is 
a map of the geodesic $p_1p_2$ to itself preserving the length. It preserves the orientation 
if $m$ is even and reverses it if $m$ is odd. So when $m$ is even, it is a translation by the number (\ref{2.1.12.1}). 
If $m$ is odd, it has a unique 
stable point, providing a preferred collection of horocycles. 

\vskip 3mm

Denote by ${\rm Conf}^\sharp_m(\C\PP^1)$ the moduli space of pairs $\{(z_1, ..., z_m), \alpha\}$, considered  
modulo the action of $PGL_2(\C)$, where $(z_1, ..., z_m)$ is 
a configuration of $m$ distinct points  on $\C\PP^1$, and $\alpha$ is an 
isotopy class of a simple oriented loop 
passing through the points so that their order is compatible with the loop orientation. 

A point of ${\rm Conf}^\sharp_{m}(\C\PP^1)$ can be thought of as 
a sphere with a complete 
hyperbolic structure with $m$ cusps $(p_1, ..., p_m)$, plus a simple geodesic polygon  connecting them. 
Cutting along the 
geodesic polygon isotopic to $\alpha$ we get two ideal geodesic $m$-gons.  
Their vertices provide a positive configuration of points on $\R\PP^1$, 
and a negative one. So we get a map 
$$
{\rm Cut}_m: {\rm Conf}^\sharp_{m}(\C\PP^1) \lra {\rm Conf}^+_{m}(\R\PP^1)\times {\rm Conf}^-_{m}(\R\PP^1).
$$

\bt \label{4.15.11.1}
When $m$ is odd, the map ${\rm Cut}_m$ is an isomorphism. 

When $m$ is even, it is a principal $\R$-fibration over the subspace  given by pairs of configurations of points 
with the same values of the Casimirs.  
\et

{\bf Proof}. Consider two geodesic polygons 
$$
P = (p_1, ..., p_m) ~~\mbox{and}~~ P^o = (p^o_1, ..., p^o_m).
$$ 
Choose horocycles $(h_1 , ..., h_m )$ at the vertices of $P$. 
Choose a horocycle $h_1^o$ at the vertex $p_1^o$ of $P^o$. 
Then there is a unique horocycle $h_2^o$ at the vertex $p_2^o$ such that 
$l(h_1^o, h^o_2) = l(h_1, h_2)$. Similarly there is a unique horocycle $h_3^o$ at the vertex $p_3^o$ 
such that $l(h_2^o, h^o_3) = l(h_2, h_3)$. And so on, till we get to the original point $p_1^o$. 
Here it is an {\it a priori}  non-trivial condition that the horocycle constructed by using the horocycle 
$h_m^o$ coincides with the horocycle $h_1^o$. So, for given horocycles $(h_1 , ..., h_m )$,  we need 
horocycles $(h^o_1 , ..., h^o_m )$ 
satisfying a system of linear equations
$$
l(h_1, h_2) = l(h^o_1, h^o_2), ~~l(h_2, h_3) = l(h^o_2, h^o_3), ~~ \ldots, ~~l(h_m, h_1) = l(h^o_m, h^o_1).
$$
When $m$ is odd, it has a unique solution. When $m$ is even, the solution exists only 
when the values of the Casimirs are equal, and is parametrised by one parameter: there is an action of the group $\R$ on the solutions given by $h_i^o \lms h_i^o+ (-1)^ic $. 

Finally, there is a unique way 
to glue the ideal polygon $P $ with the horocycles $(h_1 , ..., h_m )$ and the ideal polygon $P^o$ 
with the horocycles $(h_1^o, ..., h_m^o)$, matching the horocycles, getting a hyperbolic surface with $m$ punctures.  
The result does not depend on the choice of horocycles $(h_1, ..., h_m)$, as well as the  
horocycle $h_1^o$. In particular, we constructed an inverse map 
$$
{\rm Glue}_{2n+1}: {\rm Conf}^+_{2n+1}(\R\PP^1)\times 
{\rm Conf}^-_{2n+1}(\R\PP^1)\stackrel{\sim}{\lra} {\rm Conf}^\sharp_{2n+1}(\C\PP^1). 
$$
Gluing polygons give a punctured surface (a surface with a complete metric) if and only 
if around each vertex on the glued surface there exists a horocycle.

\begin{figure}[ht]
\centerline{\epsfbox{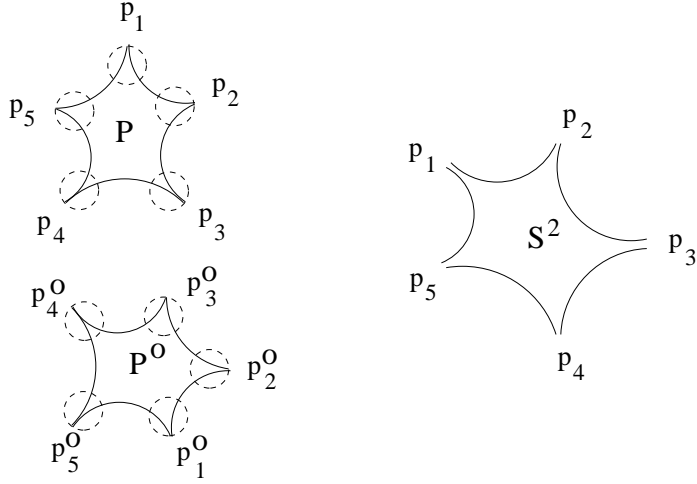}}
\caption{Gluing two ideal geodesic pentagons into a hyperbolic sphere with $5$ cusps.}
\label{double15}
\end{figure}

\subsubsection{Coordinates on the space ${\rm Conf}^\sharp_{m}(\C\PP^1)$}  
\la{Sec2.1.1}
Let $((z_1, ..., z_m), \alpha)$ be a point of  ${\rm Conf}^\sharp_{m}(\C\PP^1)$. 
Let $P_m$ be a convex $m$-gon whose vertices are parametrised by the points $(z_1, ..., z_m)$ 
so that the order of the vertices is compatible with a cyclic order of the points on the loop $\alpha$. 
Let $T$ be a triangulation of $P_m$. 
Let us assign to $T$ a coordinate system $\{x_E, b_E\}$ on the space ${\rm Conf}^\sharp_{m}(\C\PP^1)$, 
where $\{E\}$ are the edges of $T$. The set of edges of $T$ does not include the sides of the polygon. 

We think of a configuration of complex points 
$(z_1, ..., z_m)$ as of a hyperbolic structure on an oriented sphere ${\Bbb S}^2$ with cusps $(p_1, ..., p_m)$. 
Pick horocycles $(h_1, ..., h_m)$ at the cusps. Let $G_\alpha$ be the ideal geodesic polygon 
with vertices at the cusps which is isotopic to  $\alpha$. 
Cutting ${\Bbb S}^2$ along the geodesic polygon $G_\alpha$, we get two ideal geodesic polygons $P$ and $P^o$. 

Consider a geodesic triangulation of $P$ realising the triangulation $T$. An edge $E$ determines an ideal geodesic quadrilateral $(p_a, p_b, p_c, p_d)$ of this triangulation 
with the diagonal $E= p_ap_c$. Let $l(h_i, h_j)$ (respectively $l^o(h_i, h_j)$) 
be the distance between the horocycles $h_i, h_j$ along the geodesic connecting $p_i$ and $p_j$ inside of $P$ 
(respectively $P^o$). 

\begin{definition} \label{4.16.11.1} The coordinates $x_E$ and $b_E$ assigned to the edge $E$ are given by 
$$
x_E:= l(h_a, h_b) - l(h_b, h_c) + l(h_c, h_d) - l(h_d, h_a), \qquad  
b_E := l^o(h_i, h_j) - l(h_i, h_j). 
$$
\end{definition} 
Evidently  they are independent of the choice of the horocycles $h_i$. 
The coordinates 
$\{x_E\}$ are the standard coordinates on the space of ideal geodesic polygons $P$.

\vskip 3mm
The exponents $X_E:= {\rm exp}(x_E)$ and $B_E:= {\rm exp}(b_E)$ 
form a coordinate system $\{X^T_E, B^T_E\}$ assigned to a triangulation $T$. 
We show below that the coordinate systems corresponding to triangulations $T$ of the $m$-gon 
provide a positive real atlas on the space ${\rm Conf}^\sharp_{m}(\C\PP^1)$.

\subsubsection{Geometric interpretation of the cluster symplectic double of type $A_m$} \la{Sec2.1.2}
The definition of cluster symplectic double us recalled in the Appendix. 

In particular, a root system of type $A_m$ gives rise to 
a cluster symplectic double which we denote by ${\cal D}_{A_m}$. 
Let ${\cal D}^+_{A_m}$ be the space of its real positive points. 
The space ${\cal D}_{A_{m}}$ is equipped with a cluster atlas $\{X^T_E, B^T_E\}$ whose 
coordinate systems are parametrised by the triangulations $T$ of a convex $(m+3)$-gon. 
\bt
There is a unique isomorphism
$$
{\rm Conf}^\sharp_{m+3}(\C\PP^1) \stackrel{\sim}{\lra} {\cal D}_{A_m}^+
$$
sending the atlas $\{X^T_E, B^T_E\}$ on ${\rm Conf}^\sharp_{m+3}(\C\PP^1)$ 
to the cluster 
atlas on the symplectic double ${\cal D}^+_{A_m}$. 
\et

{\bf Proof}. Consider an ideal geodesic quadrilateral with vertices $p_1, p_2, p_3, p_4$. 
Denote by $(B_{ij}, X_{ij})$ the pair of coordinates assigned to the edge $p_ip_j$. 
Let us calculate how the coordinate $B_{13}$ changes 
under the flip at the edge $E_{13} = p_1p_3$, see Fig. \ref{double5}. 

We will use shorthands $l_{ij} = l(h_i, h_j)$, $l^o_{ij} = l^o(h_i, h_j)$. Recall the Pl\"ucker relation
$$
{\rm exp}(l_{13}){\rm exp}(l_{24}) = {\rm exp}(l_{12}){\rm exp}(l_{34})  + 
{\rm exp}(l_{14}){\rm exp}(l_{23}). 
$$
There is a similar relation for the ${\rm exp}(l^o_{ij})$. 
Then the  flipped $B$-coordinate 
$B_{24}$ equals
$$
B_{24}:= \frac{{\rm exp}(l^o_{24})}{{\rm exp}(l_{24})} = 
\frac{\Bigl({\rm exp}(l^o_{12} +l^o_{34}) + {\rm exp}(l^o_{14} +l^o_{2,})\Bigr) {\rm exp}(l_{13})}{
\Bigl({\rm exp}(l_{12} +l_{34}) + {\rm exp}(l_{14} +l_{23})\Bigr) {\rm exp}(l^o_{13})} 
= 
\frac{\Bigl(B_{12}B_{34} + X_{13} B_{14}B_{23}\Bigr)}
{\Bigl(1 + X_{13} \Bigr)B_{13}}.
$$
 This agrees with the mutation formula for the $B$-coordinates. 
The mutation formulas for the $X$-coordinates are the standard ones. The theorem is proved. 

\begin{figure}[ht]
\centerline{\epsfbox{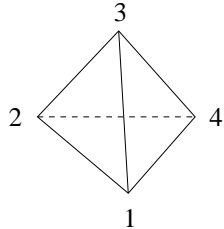}}
\caption{A flip at the edge $E_{13}$.}
\label{double5}
\end{figure}

\paragraph{The symplectic structure \cite{FG3}.} The symplectic structure 
on the space ${\rm Conf}^\sharp_{m}(\C\PP^1)$ 
in the coordinate system $\{b_E, x_E\}$ related to any 
ideal triangulation $T$ of the $m$-gon $P$ is given by 
$$
-\frac{1}{2}\sum_{E, F} \varepsilon_{EF} db_E\wedge db_F - \sum_{E} db_E\wedge dx_E.
$$
The corresponding Poisson bracket is given by 
$$
\{x_E, x_F\} = \varepsilon_{EF} x_Ex_F, \quad \{x_E, b_F\}=\delta_{EF} \quad \{b_E, b_F\}=0.
$$
The space ${\rm Conf}^+_{m}(\R\PP^1)$ is a Poisson space with the Poisson structure 
given in the coordinate system $\{x_E\}$ related to any ideal triangulation $T$ by the formulas
$$
\{x_E, x_F\} = \varepsilon_{EF} x_Ex_F.  
$$
It is non-degenerate if $m$ is odd, providing a symplectic structure on ${\rm Conf}^+_{m}(\R\PP^1)$. 
When $m$ is even, 
the Casimir function $C$ generates the center of the Poisson structure. 

The cutting map ${\rm Cut}_m$ is a Poisson map. Notice that $\varepsilon_{EF}^o= - \varepsilon_{EF}$. 

\subsubsection{Theorem \ref{4.15.11.1} and the Bers double uniformization theorem.} \la{Sec2.1.3} The moduli space 
${\rm Conf}^+_{m}(\R\PP^1)$ is 
the Teichm\"uller space parametrising complex structures on the disc $\widehat D_m$ 
with $m$ marked points on the boundary. 
The moduli space 
${\rm Conf}^\sharp_{m}(\C\PP^1)$ should be viewed as a baby version of the space of quasifuchsian groups for the  
disc $\widehat D_m$, with Theorem \ref{4.15.11.1} being the analog of the Bers double uniformization theorem. 

\vskip 3mm
Let $S$ be a closed hyperbolic surface. Let $\partial_\infty\pi_1(S)$ be the boundary at infinity 
of the fundamental group of $S$. It is a cyclic $\pi_1(S)$-set, homeomorphic to an oriented circle. 
A map $\partial_\infty\pi_1(S) \lra \R\PP^1$ is {\it positive} if it preserves the cyclic order. 
The Teichm\"uller space ${\cal T}(S)$  is identified (\cite[Lemma 1.1]{FG1})  with 
the set of $\pi_1(S)$-equivariant positive maps 
$$
\varphi: \partial_\infty\pi_1(S) \lra \R\PP^1 ~~~\mbox{modulo the action of $PSL_2(\R)$}. 
$$

\bd A representation 
$
\rho: \pi_1(S) \lra PSL_2(\C)
$ 
is {\rm quasifuchsian} if the induced map  
$$
\psi_\rho: \partial_\infty\pi_1(S) \lra \C\PP^1
$$
is a homeomorphism on its image, i.e. its limit set is a Jordan curve. 
\ed

A quasifuchsian representation $\rho$ is uniquely described by the $\pi_1(S)$-equivariant map $\psi_\rho$. 
The condition that $\rho$ is quasifuchsian is 
equivalent to the following condition on the limit set $$
C_\rho:= \psi_\rho(\partial_\infty\pi_1(S))
$$ of a representation $\rho$:
the convex core, defined as the convex hull of $C_\rho$ modulo the action 
of $\pi_1(S)$, is compact, i.e. its projection to ${\cal H}^3/\rho(\pi_1(S))$ is compact.

Denote by $Q(S)$ the space of quasifuchsian representations of $\pi_1(S)$ modulo the conjugation. 
Then cutting $\C\PP^1$ along the limit set $C_\rho$ we get the Bers map
$$
\beta: Q(S) \lra {\cal T}_S \times {\cal T}_{S^0}.
$$
Indeed, let $\C\PP^1 - C_\rho = D \cup D^0$. Then $\pi_1(S)$ acts discretely on  $D$ and $D^o$, providing 
Riemann surfaces $D/\rho(\pi_1(S))$ and $D^0/\rho(\pi_1(S))$ homeomorphic to $S$. 
The disc $D^o$ and the second 
surface have the opposite orientation. 
Here is the Bers double uniformization theorem.
\bt
The Bers map is an isomorphism. 
\et

Unlike the Bers theorem, Theorem \ref{4.15.11.1} has a simple constructive proof. 
Here is an approach for a new  
proof of the Bers double uniformization theorem, as a limit of Theorem \ref{4.15.11.1}.

\vskip 3mm
Pick a finite subset $C_{2n+1} \subset \partial_\infty\pi_1(S)$. Then 
$\psi_\rho(C_{2n+1})$ is a configuration of $2n+1$ points on $\C\PP^1$, 
and the limit curve $C_\rho$ provides a loop $\alpha$. So we get a point of 
${\rm Conf}^\sharp_{2n+1}(\C\PP^1)$. The Bers map is described by two $\pi_1(S)$-equivariant positive maps 
$$
\varphi, \varphi^o: \partial_\infty\pi_1(S) \lra \R\PP^1.
$$
Their restriction to $\psi_\rho(C_{2n+1})$ should converge to the cutting map: 
as the subset $C_{2n+1}$ approximates $\partial_\infty\pi_1(S)$, the vertical arrows 
in the diagram below should approximate isomorphisms
\be
\begin{array}{ccc}
Q(S) &\stackrel{\beta}{\lra} &{\cal T}_S \times {\cal T}_{S^0}\\
\downarrow &&\downarrow \\
{\rm Conf}^\sharp_{2n+1}(\C\PP^1)& \lra&{\rm Conf}^+_{2n+1}(\R\PP^1)\times {\rm Conf}^+_{2n+1}(\R\PP^1)
\end{array}
\ee

\bc
When the subsets $C_{2n+1}$ approximate $\partial_\infty\pi_1(S)$, the maps $\psi_{C_{2n+1}}$ converge to a limit 
$$
\psi: \partial_\infty\pi_1(S) \lra \C\PP^1,
$$
providing a quasifuchsian representation of $\pi_1(S)$. 
\ec

\subsection{Teichm\"uller space for a closed 
surface with a simple lamination} \la{Sec2.2}
Let $\Sigma$ be a closed oriented hyperbolic surface. 
A {\it simple lamination} on  $\Sigma$ is  
a finite  collection $\{\gamma_i\}$  of simple  non-trivial 
disjoint nonisotopic loops on $\Sigma$ modulo isotopy.  
So  $\gamma:= \cup_i\gamma_i$ is a curve without self-intersections. 

Let us introduce a moduli space ${\cal X}^+_{\Sigma; \gamma}$ assigned to 
a simple lamination $\gamma$ on $\Sigma$.\footnote{We show in Section 2.2 that it is the set of $\R_{>0}$-points of 
a moduli space ${\cal X}_{PGL_2, S;\gamma}$ defined there.}
It will differ from $\mathcal{X}^+_\Sigma$ 
by including some nodal surfaces and some discrete data.
It has a stratification parametrised by 
collections of components of  $\gamma$. The open stratum parametrises 
complex structures on $S$ plus a  choise of an orientation for every loop 
of $\gamma$. Let us define the stratum assigned to 
a collection of loops $\{\gamma_1, \ldots , \gamma_k\}$. Let us pinch these loops  
to the nodes $p_1, \ldots , p_k$, getting a singular surface 
$  \Sigma_{p_1, \ldots , p_k}$ with a simple lamination $\gamma_{p_1, \ldots , p_k}$ 
given by the image of  
$\gamma - 
\{\gamma_1, \ldots , \gamma_k\}$. 

We say that a horocycle $c'$ at a node $p$ is obtained from 
a horocycle $c$ at  $p$ by a shift by $l \in \R$ 
if both are on the same side of $p$, 
and the distance from $c$ to $c'$ in the off $p$ direction is $l$.

\begin{definition} The stratum ${\cal X}^+_{S;\gamma; p_1, \ldots , p_k}$
parametrises complex structures on $  \Sigma_{p_1, \ldots , p_k}$ 
plus the following gluing data:

\begin{itemize}
\item An orientation for every loop $\gamma_i$ of the simple lamination 
$\gamma_{p_1, \ldots , p_k}$ on $\Sigma_{p_1, \ldots , p_k}$. 

\item For every node $p_i$, a pair of horocycles $(c_{-, i}, c_{+, i})$ centered at the node $p_i$, 
located at the different sides to the node, 
 and   defined up to a shift by the same number, see Fig. \ref{double14}: 
$$
(c_{-, i}, c_{+, i}) \sim (c_{-, i}+a, c_{+, i}+a).
$$

\end{itemize}

\end{definition}

\begin{figure}[ht]
\centerline{\epsfbox{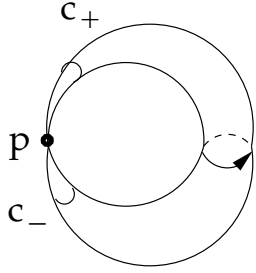}}
\caption{A codimension one stratum for a genus three surface with a two-loop lamination.} 
\label{double14}
\end{figure}

Cut the surface $\Sigma$ with a hyperbolic metric along the geodesic isotopic to a loop 
$\gamma_i$. We get a surface $\Sigma_i$ with geodesic boundary. The points of the Teichm\"uller 
space for $\Sigma$ are obtained from the ones for $\Sigma_i$ via a gluing 
procedure introducing one real parameter --  {\it the Dehn twist} 
along the loop $\gamma_i$. 

Here is a standard definition of the Dehn twist parameters. 
Take a universal cover $\widetilde {\Sigma_i}$ 
of $\Sigma_i$. 
It is obtained by cutting out from the hyperbolic plane ${\cal H}$ geodesic half discs 
bounded by the preimages of the boundary geodesic loops $\gamma_{\pm, i}$ on $\Sigma_i$. 
Choose a pair of boundary geodesics 
$g_{\pm}$ on $\widetilde {\Sigma_i}$ projecting to  $\gamma_{\pm, i}$. 
The geodesics $g_{\pm}$ are oriented, so that their orientations agree with the 
orientations of the boundary components $\gamma_{\pm, i}$ induced by the surface orientation.  
The Dehn twist parameter assigned to $\gamma_i$ parametrises orientation {\it reversing} 
isometries  $f: g_+\to g_-$:
\be \la{2.1.2012.1}
\{\mbox{Dehn twists for $\gamma_i$}\} \stackrel{\sim}{\lra} 
\{\mbox{maps $f: g_+ \to g_-$ such that $f(x+c) = f(x)-c$}\}. 
\ee

It is convenient for us to modify slightly this definition. Observe that a choice 
of the orientation of the loop $\gamma_i$ which enters in the definition of the 
stratum ${\cal X}^+_{S;\gamma; p_1, \ldots , p_k}$ provides a simultaneous 
choice of ends of the geodesics $g_\pm$. It provides therefore 
orientations of these geodesics, directed out of the chosen ends. The group $\R$ 
acts by translations of the geodesics so that a shift by a positive number moves a point according to the orientation. 
The Dehn twists assigned to $\gamma_i$ are parametrised by orientation {\it preserving}
isometries  $f: g_+\to g_-$: 
\be \la{2.1.2012.1a}
\{\mbox{Dehn twists for $\gamma_i$}\} \stackrel{\sim}{\lra} 
\{\mbox{maps $f: g_+ \to g_-$ such that $f(x+c) = f(x)+c$}\}. 
\ee

Let us glue the strata into a space ${\cal X}^+_{\Sigma; \gamma}$ 
so that pinching  
$\gamma_i$ to the node $p_i$ we get in the limit the corresponding stratum, such that the following condition holds:

\begin{itemize}

\item {\it The gluing transforms the Dehn twist action of $\R$ into the 
action of $\R$ provided by shifting the horocycle $c_i$ by $l \in \R$}.
\end{itemize}

The data (\ref{2.1.2012.1a}) is the same as a choice of a pair of horocycles 
centered at the chosen ends of the geodesics 
$g_+$ and $g_-$, defined up to their shifts by the same number, see Fig \ref{double12}. 
Indeed, given two such horocycles $c_+$ and $c_-$ 
there is a unique orientation preserving 
isometry $g_+ \to g_-$ identifying $c_+ \cap g_+$ and $c_- \cap g_-$.  
\begin{figure}[ht]
\centerline{\epsfbox{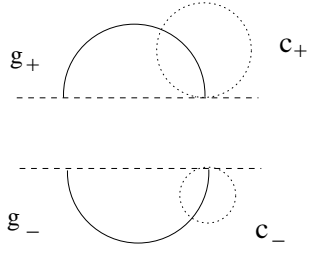}}
\caption{A pair of geodesics with a pair of horocycles centered at the chosen ends.}
\label{double12}
\end{figure}
Pinching the geodesic isotopic to the loop $\gamma_i$ on $\Sigma$ we  shrink the geodesics 
$g_{+}$ and $g_-$ to cusps, but keep  
a pair of horocycles centered at the cusps, defined up to a common shift. 
Thus in the limit we get a point of the 
 corresponding stratum. 

\vskip 3mm
The group $\R$ acts on (\ref{2.1.2012.1a}) by $(t_af)(x):= f(x+a)$, $a\in \R$.  
There is an action of 
the group $\R^{k}$ 
 on the space ${\cal X}^+_{\Sigma;\gamma}$: 
an element $a \in \R$ in the factor 
assigned to a loop $\gamma_i$ acts shifting by $a$ the Dehn parameter if $\gamma_i$ 
was not shrank to a node, and by shifting the horocycle $c_{+,i}$ by $a$ otherwise.  
It makes ${\cal X}^+_{\Sigma;\gamma}$ into a principal $\R^k$-fibration. 

The stratum assigned to
 $  \Sigma_{p_1, \ldots , p_k}$ 
 is fibered over the stratum of  the Weil-Peterson completion of the 
classical 
Teichm\"uller space of $\Sigma$ 
assigned to $\{\gamma_1, \ldots ,\gamma_k\}$. 
The latter stratum is of  
real codimension $2k$, while the former is of real codimension $k$. 
The Weil-Peterson stratum is the quotient of our stratum by the action of $\R^k$. 
Our strata lie inside of the space ${\cal X}^+_{\Sigma; \gamma}$, while the 
Weil-Peterson strata lie on the boundary of the Teichm\"uller space. 

There is a natural action of the group $(\Z/2\Z)^n$ on the  space ${\cal X}^+_{\Sigma; \gamma}$, where $n$ is the number of 
connected components of the lamination $\gamma$. It preserves the stratification, and 
 acts on the stratum ${\cal X}^+_{\Sigma;\gamma; p_1, \ldots , p_k}$ via the quotient $(\Z/2\Z)^{n-k}$, 
by changing the orientations of the $n-k$ loops of $\gamma$ which were not shrank to the nodes. 
The quotient ${\cal X}^+_{\Sigma; \gamma}/(\Z/2\Z)^n$ is a manifold with corners of depth $\leq n$, obtained by 
completion of the classical Teichmuller space of $\Sigma$.

\subsection{The modified Teichm\"uller space for the double.} 
\la{Sec2.3}
Let $S$ be an oriented hyperbolic surface with $n> 0$ holes $h_i$. 
Denote by $S^o$ 
the same surface with the opposite orientation. 
The double $S_{\cal D}$ of $S$ is defined by gluing 
the surfaces $S$ and $S^o$ along the corresponding parts of the  boundaries, 
see Fig \ref{double11}. 
It is an oriented  surface without holes. 
It carries a  simple lamination $\gamma$  
obtained by gluing the boundaries of $S$ and $S^o$. 
Cutting the double $S_{\cal D}$ 
along $\gamma$ we recover $S$ and $S^o$. 

\begin{definition} \label{D2q} 
The moduli space ${\cal D}^+_{S}$ is the space ${\cal X}^+_{S_{\cal D}; \gamma}$ 
for the lamination $\gamma$.
\end{definition}

\paragraph{Coordinates  on ${\cal D}^+_{S}$.} 
Denote by $h_1, ..., h_n$ the holes on $S$, and by 
$\partial h_1, ..., \partial h_n$ the corresponding boundary components of $S$. 
Shrink the holes $h_1, ..., h_n$ on $S$ to punctures $p_1, ..., p_n$, 
 getting a surface $S'$. An  {\it ideal triangulation} of $S'$
 is a triangulation of $S'$ 
with vertices at the punctures.

\begin{theorem} \label{COORD} Given an  {\it ideal triangulation} of $S'$,
 the  space ${\cal D}^+_{S}$ 
has a coordinate system which identifies it with $\R^{-6\chi(S)}$. 
\end{theorem}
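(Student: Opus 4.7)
The plan is to construct the coordinates on ${\cal D}^+_S = {\cal X}^+_{S_{\cal D};\gamma}$ by cutting $S_{\cal D}$ along the neck lamination $\gamma$, parameterizing each half via the Fock--Goncharov shear coordinates attached to $T$, and recording the Dehn-twist gluing parameters along the $n$ loops of $\gamma$. The dimension check is immediate: since $T$ is an ideal triangulation of a closed genus-$g$ surface with $n$ punctures (vertices at the punctures), the identities $V - E + F = 2 - 2g$ with $V=n$ and $3F = 2E$ give $E = 3n + 6g - 6 = -3\chi(S)$, so the goal is $2E = -6\chi(S)$ real coordinates, which will appear as $E$ pairs $(X_e, B_e)$ indexed by edges $e \in T$.

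I would invoke the positive-real counterpart of Theorem \ref{11.11.06.2}, whose proof parallels the classical cutting-and-gluing of hyperbolic surfaces along a simple geodesic multicurve (\cite{FG1}, Sec.~7). This gives a principal $(\R_{>0})^n$-fibration
$$
\mathrm{Res}: {\cal D}^+_S \lra {\cal X}^{\mathrm{red},+}_{S \sqcup S^o} \subset {\cal X}^+_{PGL_2,S} \times {\cal X}^+_{PGL_2,S^o},
$$
whose fiber coordinates are the Dehn-twist/horocycle-shift parameters $t_1,\ldots,t_n$ of Section \ref{Sec2.2}. The Fock--Goncharov coordinates attached to $T$ identify ${\cal X}^+_{PGL_2,S}$ and ${\cal X}^+_{PGL_2,S^o}$ with $\R^E_{>0}$ via $\{X_e\}$ and $\{X^o_e\}$; the reduced subspace is cut out by the $n$ boundary-monodromy equalities $\prod_{e\ni p_i} X_e = \prod_{e\ni p_i} X^o_e$, giving ${\cal X}^{\mathrm{red},+}_{S \sqcup S^o} \cong \R^{2E-n}_{>0}$. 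Composing the fibration with this parameterization yields $(2E-n)+n = 2E$ free positive real parameters, so ${\cal D}^+_S \cong \R^{2E}_{>0} \cong \R^{-6\chi(S)}$ after taking logarithms.

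To present the data as $E$ cluster pairs $(X_e, B_e)$ indexed by edges of $T$, in the style of Section \ref{Sec2.1.1}, I would take $X_e$ to be the Fock--Goncharov shear on the $S$ side and adapt Definition \ref{4.16.11.1} to define $B_e$: pinch the lamination loops to cusps, promoting each half to a punctured hyperbolic surface; choose horocycles at the resulting cusps matched across the two sides via the twist data $t_i$; and set $b_e := l^o(h_a,h_c) - l(h_a,h_c)$ along each edge $e = p_ap_c$ using geodesic distances on the two halves. The boundary-monodromy-matching conditions play the role of the even-$m$ Casimir equalities in Theorem \ref{4.15.11.1}, and the $n$-parameter horocycle-shift freedom on each side is absorbed by the twist data.

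The main difficulty is showing that the reorganized pairs $(X_e,B_e)$ form a free coordinate system globally: the $n$ Dehn-twist degrees of freedom must precisely cancel the $n$-parameter horocycle-shift ambiguity, so that the $E$ functions $B_e$ are independent of each other and of the $\{X_e\}$. This is a surface-with-holes analogue of the horocycle-existence-and-uniqueness argument in the proof of Theorem \ref{4.15.11.1}, adapted to the ideal triangulations $T$ and $T^o$ of $S'$ and $(S^o)'$ instead of ideal polygons. Once independence is established, positivity and global bijectivity follow from the Fock--Goncharov positivity on each half together with the triviality of the $(\R_{>0})^n$-fibration, and we obtain the claimed identification ${\cal D}^+_S \cong \R^{-6\chi(S)}$.
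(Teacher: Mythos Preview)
Your fibration argument is sound and actually supplies something the paper's proof leaves implicit: a clean reason why ${\cal D}^+_S$ has the right dimension and is globally $\R^{-6\chi(S)}$. The paper, by contrast, spends its proof directly constructing the two functions $(x_E, b_E)$ attached to each edge and more or less asserts they are coordinates, relying on the stratification and the $\R^k$-action set up in Section~\ref{Sec2.2} without spelling out the bijection.

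Where you diverge from the paper is in the construction of $b_E$. You propose to \emph{pinch} the neck loops to cusps and then use the polygon-style definition $b_E = l^o - l$ from Definition~\ref{4.16.11.1}, recovering the twist information by ``matching horocycles via the twist data $t_i$.'' This step is not quite well posed as written: once you have pinched, the Dehn twist is gone, and it is not clear what ``matching via $t_i$'' means on the nodal surface. The paper avoids this by \emph{not} pinching. It works on the universal covers $\widetilde S$, $\widetilde{S^o}$, lifts the boundary loop $\gamma_i$ to a geodesic $g_i$, lifts the edge $E$ to a geodesic $\widetilde E$ spiralling into the chosen end of $g_i$, and places horocycles at those ends. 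The key observation (made just above Fig.~\ref{double12}) is that the Dehn-twist gluing datum at $\gamma_i$ is \emph{the same thing} as a pair of horocycles $(c_i, c_i^o)$ at the matching ends of $g_i, g_i^o$, defined up to a common shift. With this dictionary, $b_E := l_{\widetilde E}(c_1,c_2) - l_{\widetilde E^o}(c_1^o,c_2^o)$ is manifestly well defined on every stratum, and its invariance under the common-shift ambiguity is immediate.

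So your overall strategy is correct and arguably more complete on the structural side; what you should replace is the ``pinch first'' description of $b_E$ by the spiralling-geodesic/horocycle-at-end picture. That single change removes the only genuine gap in your write-up, and the independence of the $(X_e, B_e)$ then follows from your own fibration argument: the $X_e$ parametrize the base ${\cal X}^+_{PGL_2,S}$, and one checks that the $\R^n$-action on the fiber moves the $b_E$'s by the incidence matrix of edges at punctures, which has rank $n$, while the $X^o_e$'s on the $S^o$ side are recovered from $(X_e, B_e)$ via the relation~(\ref{6.20.07.1}).
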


\begin{figure}[ht]
\centerline{\epsfbox{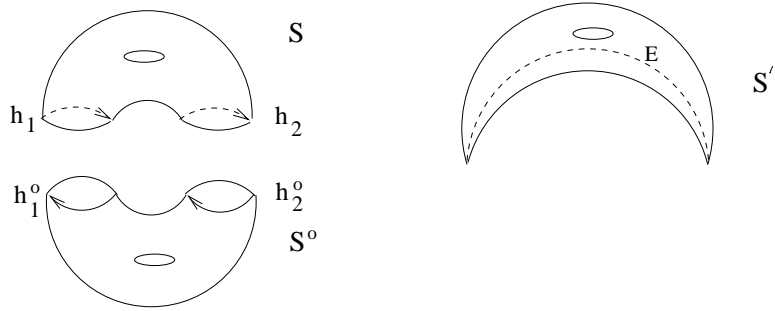}}
\caption{Left: gluing surfaces $S$ and $S^o$; Right: shrinking holes on $S$, getting a surface $S'$.}
\label{double11}
\end{figure}

{\bf Proof}. Take the universal cover 
$\widetilde S$ of $S$, and the universal cover 
$\widetilde {S^o}$ of $S^o$. Take an ideal edge $E$ connecting punctures 
$p_1$ and $p_2$ on $S'$, see Fig. \ref{double11}. 
Let $E^o$ be its mirror image on $S^o$.

Choose a pair of geodesics 
$g_1, g_2$ on $\widetilde S$ projecting to the boundary geodesics 
corresponding to the holes $h_1, h_2$.  The orientation of the loop $\gamma_i$ 
determines an end $e_i$ of the geodesic $g_i$. There is a geodesic $g_E$ on $S$ realizing 
the edge $E$ which spirals around the holes $h_1, h_2$ 
towards the ends $e_1, e_2$. Consider the geodesic $\widetilde E$ on $\widetilde S$ 
projecting to the geodesic $g_E$. It has the ends at $e_1, e_2$. 
\begin{figure}[ht]
\centerline{\epsfbox{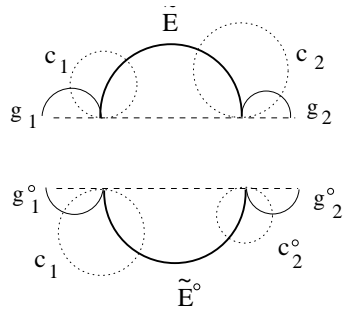}}
\caption{A geometric description of the coordinate $B_E$.}
\label{double13}
\end{figure}

A choice of the geodesics $g_1$ and $E$ determines uniquely the geodesic $\widetilde E$.  
Then the geodesic $g_2$ is determined uniquely by $\widetilde E$. 
Take a similar data $g^o_1, g^o_2$ and $\widetilde E^o$
on $\widetilde {S^o}$ assigned to  $E^o$. 
The gluing data contains a pair of 
horocycles $(c_1, c_1^o)$ centered at the ends $e_1, e_1^o$ of the geodesics 
$g_1, g_1^o$, defined up to a common shift, 
and a similar pair of horocycles $(c_2, c_2^o)$.

\begin{definition} The coordinate $b_E$ assigned to the edge $E$ is the difference of lengths of geodesics 
$\widetilde E$ and  $\widetilde E^o$, measured 
using pairs of horocycles $(c_1, c_1^o)$ and $(c_2, c_2^o)$,   
see Fig. \ref{double13}:
$$
b_E := l_{\widetilde E}(c_1, c_2) - l_{\widetilde E^o}(c_1^o, c_2^o) .
$$
\end{definition} 
Here $l_{\widetilde E}(c_1, c_2)$ 
is the distance between  
$\widetilde E\cap c_1$ and $\widetilde E\cap c_2$. Clearly 
$b_E$ does not depend on shift of pairs of horocycles 
$(c_1, c_1^o)$ and $(c_2, c_2^o)$. It is a new coordinate assigned to the edge $E$. 

The edge $E$ determines an ideal quadrilateral $(E_{12}, E_{23}, E_{34}, E_{41})$ with the diagonal $E = E_{13}$,  
see Fig. \ref{double5}. 
\begin{definition} The coordinate $x_E$ assigned to the edge $E$ is given by 
$$
x_E:= l_{E_{12}}(c_1, c_2) - l_{E_{23}}(c_2, c_3) + l_{E_{34}}(c_3, c_4) - l_{E_{41}}(c_4, c_1).  
$$
\end{definition} 
The coordinate $x_E$ is independent of the choice of horocycles $c_i$. 
It is the standard shear coordinate assigned to the edge $E$.

\subsection{The moduli space ${\cal D}^*_{S}$} 
\la{Sec3}

Let $S$ be a surface with holes. 
We introduce a moduli space ${\cal D}_{S}$, and identify the set of its 
positive points with the space ${\cal D}^+_{S}$. 

We start with a moduli space ${\cal D}^*_{S}$ which is an 
open part of the moduli space ${\cal D}_{S}$. 
Its advantage is that it can be defined as a moduli 
space of local systems on $S$.

Recall the canonical 
involution  $\sigma: {S}_{\cal D} \to {S}_{\cal D}$  
and the subgroup of the $\sigma$-invariant maps
\be \la{SUBGR}
\Delta_{SL_2} \subset {\rm Hom}(H_1({S}_{\cal D}, \Z), \Z/2\Z). 
\ee
Realising $\Z/2\Z$ as the center of the group $SL_2$, we make the group 
${\rm Hom}(H_1({S}_{\cal D}, \Z), \Z/2\Z)$ act 
on the space of twisted $SL_2$-local systems on $S_{\cal D}$.

\begin{definition} \label{D2} The moduli space ${\cal D}^*_{S}$ parametrises 
the orbits of the subgroup  $\Delta_{SL_2}$ 
 on the moduli space of twisted framed $SL_2$-local systems on ${S}_{\cal D}$.
\end{definition}

The moduli space 
${\cal X}_{S}:= {\cal X}_{PGL_2,S}$ parametrises $PGL_2$-local systems on $S$ with a framing, i.e. 
a choice of an 
eigenspace for the monodromy around every boundary component of $S$.

 Let us define an atlas on the  space ${\cal D}^*_{S}$ 
whose coordinate 
systems are parametrized by {ideal triangulations} $T$ of $S'$. 
Given such a $T$, we define  
a rational coordinate system $(B_E, X_E)$, where $E$ runs through the edges of 
$T$. This atlas  on ${\cal D}^*_{S}$ has a structure of the cluster symplectic  
double of the Poisson moduli space 
${\cal X}_{S}$.

Given a pair of vectors $v_1, v_2$ in a two-dimensional symplectic vector space, 
denote by $\Delta(v_1, v_2)$ the area of the 
parallelogram spanned by these vectors. 
Take an ideal  quadrilateral with vertices parametrised by a set 
$\{1,2,3,4\}$. Pick a non-zero vector in the fiber of the two dimensional 
vector bundle associated with ${\cal L}$ over each of the vertices of the quadrilateral which projects to the eigenline defining the framing at that vertex. 
We get a pair of configurations 
of four non-zero vectors in a two dimensional vector space: 
$$
(l_1, l_2, l_3, l_4) \quad \mbox{and} \quad (l^o_1, l^o_2, l^o_3, l^o_4), 
$$ 
well defined up to an action of the group $(\C^*)^4$, where 
an element $\lambda_i$ from the $i$-th factor $\C^*$ multiplies 
each of the vectors $l_i$ and $l_i^o$ by $\lambda_i$. 
The vectors $l_i$ and $l_i^o$ are assigned to 
the vertex $i$ of the quadrilateral. 
The $B$- and $X$-coordinates assigned to the edge $(1,3)$ are 

$$
B_{13}:= \frac{\Delta(l^o_1, l^o_3)}{\Delta(l_1, l_3)}, \qquad 
X_{13}:= \frac{\Delta(l_1, l_4)\Delta(l_2, l_3)}
{\Delta(l_1, l_2)\Delta(l_3, l_4)}.
$$
Multiplying both $l_i$ and $l^o_i$ by $\lambda_i$ we do not 
 change  $B_{13}$. Clearly one has

\begin{equation} \label{6.20.07.1}
X^o_{13}= \frac{\Delta(l^o_1, l^o_2)
\Delta(l^o_3, l^o_4)}{\Delta(l^o_1, l^o_4)\Delta(l^o_2, l^o_3)} 
= X_{13}^{-1}\frac{B_{12}B_{34}}{B_{14}B_{23}}.
\end{equation}

\begin{theorem} \label{MT100} (i) 
The rational functions $(B_E, X_E)$ 
assigned to an ideal triangulation $T$ of $S'$ provide a  
coordinate system on 
the moduli space ${\cal D}^*_{S}$. 

(ii) The atlas given by these coordinate systems is 
the cluster atlas 
for the double of 
${\cal X}_{S}$.

(iii) There is a canonical isomorphism ${\cal D}^*_{S}(\R_{>0}) = {\cal D}^+_{S}$.
\end{theorem}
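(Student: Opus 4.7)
The plan is to reduce each of the three statements to a local calculation on an ideal triangle or quadrilateral, using the flag/decoration data provided by the framing to produce explicit trivializations.

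\textbf{Part (i).} First I would count dimensions: an ideal triangulation of $S'$ has $-3\chi(S)$ edges, so the collection $(B_E,X_E)$ produces $-6\chi(S)$ rational functions, matching $\dim {\cal D}^*_{S}$. To establish that these are coordinates on a Zariski open set, I would construct an inverse on a generic locus. Given values $(B_E,X_E)$, the $X_E$-values are the standard Fock--Goncharov shear coordinates, so they reconstruct a framed $PGL_2$-local system on $S$, and their mirror images reconstruct one on $S^o$. Fixing a triangle $t$ of $T$, I would lift the framing flags at its three vertices to a choice of nonzero vectors $l_i$ in the fibers of an associated $SL_2$-bundle (available after passing to the twisted setting); parallel transport along edges of $T$ propagates these across $S$, and the ratios $\Delta(l_i,l_j)$ on the $S$-side together with the prescribed $B_E$ determine the corresponding ratios on the $S^o$-side. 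Gluing along the neck lamination is forced by matching boundary monodromies and decorations. The ambiguity in the lifts $l_i$ is precisely the action of $(\Z/2\Z)$ at each puncture, which is exactly killed by the quotient by $\Delta_{SL_2}$ in Definition~\ref{D2}.

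\textbf{Part (ii).} Here I would verify the mutation formulas for a flip of the edge $E=E_{13}$ inside an ideal quadrilateral with vertices $1,2,3,4$. The $X$-coordinate transformation is the standard one, following from the Plücker relation $\Delta(l_1,l_3)\Delta(l_2,l_4)=\Delta(l_1,l_2)\Delta(l_3,l_4)+\Delta(l_1,l_4)\Delta(l_2,l_3)$ applied both on $S$ and $S^o$. For the $B$-coordinate, the computation is exactly the one carried out in the proof of the theorem following Definition~\ref{4.16.11.1}: expanding $B_{24} = \Delta(l^o_1,l^o_3)^{-1}\Delta(l^o_2,l^o_4)\cdot \Delta(l_1,l_3)/\Delta(l_2,l_4)\cdot\Delta(l_1,l_3)^{-1}\Delta(l_2,l_4)^{-1}\cdot\ldots$ using the two Plücker relations yields
\[
B_{24}\;=\;\frac{B_{12}B_{34}+X_{13}\,B_{14}B_{23}}{(1+X_{13})\,B_{13}},
\]
which matches the $B$-mutation of the cluster symplectic double (formulas (\ref{zx1qt})--(\ref{4.28.03.11x}) referenced in the introduction). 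Together with the known $X$-mutation this identifies our atlas with the cluster symplectic double atlas of ${\cal X}_{S}$.

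\textbf{Part (iii).} For the positivity statement, I would compare the algebraic coordinates $(B_E,X_E)$ with the hyperbolic-geometric coordinates $(b_E,x_E)$ of Section~\ref{Sec2.3} via the map $(B_E,X_E)=(\exp b_E,\exp x_E)$. On one side, positive real points of ${\cal X}_{PGL_2,S}$ are well known to parametrize the enhanced Teichm\"uller space via the shear coordinates $\exp x_E$. On the other side, the $B_E$ coordinate, defined as the ratio $\Delta(l^o_1,l^o_3)/\Delta(l_1,l_3)$, becomes positive precisely when the lifted flags $l_i,l^o_i$ can be chosen consistently over the real points, and its logarithm is exactly the geodesic length discrepancy $l_{\widetilde E^o}(c_1^o,c_2^o)-l_{\widetilde E}(c_1,c_2)$ of Definition following Definition~\ref{COORD}, matching $b_E$ up to the chosen sign convention. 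The gluing ambiguity (the common horocycle shifts) is absorbed into the projective ambiguity in the $l_i$, so the identification is canonical.

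\textbf{Main obstacle.} The step I expect to require the most care is part~(i): controlling the precise kernel of the map from framed twisted $SL_2$-local systems on $S_{\cal D}$ to the coordinate tuples, and checking that the $\Delta_{SL_2}$-quotient exactly accounts for the $\pm 1$ ambiguities in the lifts $l_i,l_i^o$ at every vertex. The rest of the argument is a bookkeeping of Plücker identities.
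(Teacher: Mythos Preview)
Your treatment of part~(ii) is essentially identical to the paper's: the same Pl\"ucker computation on the quadrilateral gives the $B$-mutation formula, and the $X$-mutation is the standard one.

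For part~(i) you take a more constructive route than the paper. The paper first checks that $B_E$ is well-defined on the $\Delta_{SL_2}$-quotient: the key observation is that an element of $\Delta_{SL_2}$, being $\sigma$-invariant, twists the local system on $S$ and on $S^o$ by the \emph{same} $\pm 1$ character, so $\Delta(l_1,l_3)$ and $\Delta(l^o_1,l^o_3)$ change by the same sign and their ratio survives. Your phrase ``$(\Z/2\Z)$ at each puncture'' is not quite this --- $\Delta_{SL_2}$ is indexed by $\sigma$-invariant classes in $H_1(S_{\cal D},\Z)$, not by punctures. The paper then does not build an explicit inverse; instead it defers to the proof of Theorem~\ref{MTD}, which argues structurally via diagram~(\ref{11.15.06.1}): ${\cal D}^*_{\G,S}$ is generically a principal $H^k$-fibration over $\Delta_{{\cal X}_{\G,S}}$ (Theorem~\ref{11.11.06.2}), and the $(B,X)$-map is compatible with and faithful for the $H^k$-action, hence surjective at the generic point. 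Your reconstruction-from-coordinates approach is reasonable but would have to replace this fibration argument with a direct verification that the gluing is determined by the $B_E$.

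For part~(iii) there is a genuine gap. You correctly observe $B_E=\exp(b_E)$, $X_E=\exp(x_E)$, but the sentence ``$B_E$ becomes positive precisely when the lifted flags can be chosen consistently'' does not by itself yield an isomorphism: you have not produced a map ${\cal D}^+_S\to{\cal D}^*_S(\R_{>0})$, nor argued it is onto. The paper supplies exactly this via a principal-bundle argument. Both ${\cal D}^+_S$ and ${\cal D}^*_S(\R_{>0})$ project to the same base --- the locus in ${\cal X}^+_S\times{\cal X}^+_{S^o}$ where boundary monodromies agree --- and both are principal $\R^k$-fibrations over it ($k$ the number of holes). The paper then constructs an explicit $\R^k$-equivariant map on the open stratum by identifying Dehn-twist parameters with the gluing data $\alpha$ of Definition~\ref{2.16.06.2}; an $\R^k$-equivariant map of principal $\R^k$-bundles over a common base is automatically an isomorphism. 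Your remark that ``the gluing ambiguity is absorbed into the projective ambiguity'' gestures at this identification, but without the two fibration structures and the equivariance you cannot conclude surjectivity.
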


\begin{proof} (i) The functions $B_E$ do not change  under the action of the 
subgroup (\ref{SUBGR}) on the moduli space of framed twisted $SL_2$-local systems on 
${S}_{\cal D}$. Indeed, acting by this subgroup 
 we alter $\Delta(l^o_1, l^o_3)$ and  $\Delta(l_1, l_3)$ by the same 
sign.  So the functions $B_E$ live on the space of orbits of the subgroup (\ref{SUBGR}). 
The same is evidently true for the functions $X_E$. 

Observe that if we did not take the quotient by the action of the subgroup (\ref{SUBGR}), 
the functions $(B_E, X_E)$ would not have the chance to be coordinates. 
 
The claim that they are coordinates 
has the same proof as the proof of the general Theorem \ref{MTD}, so we skip it here.  

(ii) We claim that our $(B,X)$-coordinates behave under a flip just like the ones 
on the symplectic double  
${\cal X}_{S}$. The $X$-coordinates are the same as for  
${\cal X}_{S}$. 
Let us calculate how the coordinate $B_{13}$ changes 
under the flip at the edge $(1,3)$, see Fig. \ref{double5}. 
Recall the Pl\"ucker relation
$$
\Delta(l_1, l_3)\Delta(l_2, l_4) = \Delta(l_1, l_2)\Delta(l_3, l_4) + 
\Delta(l_1, l_4)\Delta(l_2, l_3),  
$$
and a similar relation for the configuration 
$(l^o_1, l^o_2, l^o_3, l^o_4)$. The flipped $B$-coordinate 
$B_{24}$ equals
$$
B_{24}:= \frac{\Delta(l^o_2, l^o_4)}{\Delta(l_2, l_4)} = 
\frac{\Bigl(\Delta(l^o_1, l^o_2)\Delta(l^o_3, l^o_4) + 
\Delta(l^o_1, l^o_4)\Delta(l^o_2, l^o_3)\Bigr)\Delta(l_1, l_3)}
{\Bigl(\Delta(l_1, l_2)\Delta(l_3, l_4) + 
\Delta(l_1, l_4)\Delta(l_2, l_3)\Bigr)\Delta(l^o_1, l^o_3)} = 
\frac{\Bigl(B_{12}B_{34} + X_{13} B_{14}B_{23}\Bigr)}
{\Bigl(1 + X_{13} \Bigr)B_{13}}.
$$
 This agrees with the mutation formula (\ref{4.28.03.11x}) for the $B$-coordinates 
from Section \ref{Sec6}.

\vskip 3mm
(iii) The space  ${\cal X}^+_{S}:= {\cal X}_{S}(\R_{>0})$ 
is identified with the modified Teichm\"uller spaces parametrising 
complex structures on $S$ plus eigenvalues of the 
monodromies around the boundary components (\cite{FG1}). The canonical projection
\begin{equation} \label{6.27.07.1}
\pi: {\cal D}^*_{S_{\cal D}}(\R_{>0}) \lra {\cal X}^+_{S} \times {\cal X}^+_{S^o} 
\end{equation} 
is a principal fibration with the fiber $\R^k$. Its image is a linear 
subspace in the logarithmic coordinates given by the condition that the monodromies 
around the holes $h_i$ and $h_i^o$ coincide. 
On the other hand, cutting the double $S_{\cal D}$ along $\gamma$ we get a projection 
\begin{equation} \label{6.27.07.2}
\pi: {\cal D}^+_{S_{\cal D}} \lra {\cal X}^+_{S} \times {\cal X}^+_{S^o}  
\end{equation} 
with the same image, which is also a principal $\R^k$-fibration. 
So to construct an isomorphism ${\cal D}^+_{S_{\cal D}} \to {\cal D}^*_{S_{\cal D}}(\R_{>0})$ 
it is sufficient to define a map of principal $\R^k$-bundles 
(\ref{6.27.07.2}) $\to$ (\ref{6.27.07.1}) over the same base. 

 The open stratum ${\cal X}^+_{S_{\cal D}; \gamma, \emptyset}$
parametrizes pairs (a complex structures on $S_{\cal D}$, 
  a choice  of an orientation for each loop $\gamma_i$). 
 Translating into  the language of positive local systems (\cite{FG1}, Section 11), 
${\cal X}^+_{S_{\cal D}; \gamma, \emptyset}$ parametrises pairs $({\cal L}, \beta)$,
  where ${\cal L}$ is 
 a positive $PGL_2(\R)$-local system on $S_{\cal D}$ (i.e. $X_E>0$ for all coordinates $X_E$ of 
a coordinate system on ${\cal X}^+_{S}$), and 
 $\beta$ encodes choice of an eigenspace of the  
monodromy of ${\cal L}$ for each loop $\gamma_i$. 
Let us define an open $\R^k$-equivariant embedding  
$$
j: {\cal X}^+_{S_{\cal D}; \gamma, \emptyset} \hra
 {\cal D}^*_{S}(\R_{>0}).
$$ 
Cutting  $S_{\cal D}$ along  $\gamma$ and restricting the pair $({\cal L}, \beta)$ 
to the obtained surface 
we get framed $PGL_2(\R)$-local systems on $S$ and $S^o$.  Since they 
arose from points of the Teichm\"uller space, they are positive.  
Their monodromies around the loops $\partial h_i$ and $\partial h^o_i$ coincide, 
and conjugate to a diagonal matrix different from the identity, with positive  diagonal entries. 
Next, the group $\R^*_+ \stackrel{\sim}{=} \R$ acts on the 
gluing data $\alpha$ in Definition \ref{D2} restricted to $\partial h_i$ by 
multiplying it by $\lambda_i \in \R^*_+$, as well as 
on the Dehn twist parameters for $\partial h_i$. 
There is an $\R$-equivariant bijection 
$$
\mbox{$\{$Dehn twist parameters for 
$\partial h_i$, an orientation of 
$\partial h_i$$\}$} \stackrel{\sim}{\lra} \mbox{$\{$Gluing data for $\partial h_i$ in Def. \ref{D2}$\}$}
$$
Moreover, by the very definition, $B_E = {\rm exp}(b_E)$, $X_E = {\rm exp}(x_E)$. Thus $B_E>0$. 
We get the embedding $j$. It extends to an $\R^k$-equivariant embedding of 
${\cal D}_S^+ \hra {\cal D}^*_{S}(\R_{>0})$. Since both spaces are principal 
$\R^k$-fibrations over the same base, we are done. \end{proof}

\section{Special coordinates on the symplectic double for general $\G$} \la{Sec4}

\subsection{Main construction}

Let us 
construct a positive  atlas on the moduli space ${\cal D}^*_{\G, \bS}$, 
whose coordinate 
systems are parametrized by the same 
set as the ones on ${\cal X}_{\G,
\bS}$, and have the properties of the cluster symplectic double atlas. 
Choose an ideal  triangulation $T$ of $\bS'$.

\paragraph{The $X$-coordinates on ${\cal D}^*_{\G, \bS}$.} 
Given a triangulation $T$, 
they are the inverse images 
of the $X$-coordinates on ${\cal X}_{\G, \bS}$ for the projection 
${\cal D}^*_{\G,\bS} \to {\cal X}_{\G,\bS}$ given by the restriction from 
$\bS_{\cal D}$ to $\bS$. 

\paragraph{The $B$-coordinates on ${\cal D}^*_{\G,\bS}$.} 
Choose a triangle $t$ of the triangulation $T$. 
Our goal is to produce a pair of points 
in ${\rm Conf}_3({\cal A}_{\widetilde \G})$ assigned to the triangle $t$ 
on $\bS'$ and a point of ${\cal D}^*_{\G,\bS}$, well defined up to a diagonal 
action of the group $H^3_{\widetilde \G}$, that is a point of 
 \be \la{7.26.14.1}
\Bigl({\rm Conf}_3({\cal A}_{\widetilde \G})
\times {\rm Conf}_3({\cal A}_{\widetilde \G})\Bigr)/H_{\widetilde \G}^3.
\ee

Denote by $p_1, p_2, p_3$ the vertices of the triangle $t$. They are 
either punctures or marked points on $\bS'$. 
Pick a path $E$ connecting  $p_i$ and $p_j$ on $\bS'$.

A) Let us consider first the case when both $p_i$ and $p_j$ are punctures. 
Denote by $h_i$ and $h_j$ the corresponding holes on $\bS$. 

Choose a point $x_s$ on the boundary loop $\gamma_s$ of the hole $h_s$. 
Take a path 
$E^+ \subset \bS \subset \bS_{\cal D}$ connecting points $x_i$ and $x_j$, which 
shrinks to a path isotopic to $E$ as we shrink the holes $h_i$ and $h_j$ to the 
punctures $p_i$ and $p_j$.  
The isotopy class of $E^+$ considered up to winding around the loops $\gamma_i$ 
and $\gamma_j$ is uniquely defined. 
Let $E^-$ be the mirror of $E^+$  under the involution of $\bS_{\cal D}$ 
interchanging $\bS$ and $\bS^o$. 

Pick a triangle $t^+ \subset \bS \subset \bS_{\cal D}$ with vertices at $x_i$'s which shrinks to $t$. 
Let $t^-\subset \bS^o \subset \bS_{\cal D}$ be its mirror. 
Points of ${\cal D}^*_{\G,\bS}$ are orbits of the group $(\ref{subgrd})$  
acting on  the following data:

\begin{enumerate}

\item A twisted $\widetilde \G$-local system 
${\cal L}$ on $\bS_{\cal D}$ and 

\item 
A flat section $\beta_{\gamma_i}$ of the flag bundle 
${\cal L}_{\cal B}$ over each loop $\gamma_i$.
\end{enumerate}

Pick an decorated flag $A_i$ at the fiber of ${\cal L}_{\cal A}$ over the point 
$x_i$ projecting to 
 the restriction of $\beta_{\gamma_i}$ to $x_i$. 
Since the triangle $t^+$ is contractible, the decorated flags at its vertices 
provide a configuration of three decorated flags. 
The same for  $t^-$. We get two triples:
\begin{equation} \label{11.5.06.2}
(a_1, a_2, a_3) \in {\rm Conf}_3({\cal A}_{\widetilde \G}) 
\quad \mbox{and} \quad 
(a^o_1, a^o_2, a^o_3) \in 
{\rm Conf}_3({\cal A}_{\widetilde \G}).
\end{equation}

\begin{lemma} \label{11.5.06.5q}
The triples (\ref{11.5.06.2})  are well defined up to the diagonal action of the group 
$H_{\widetilde \G}^3$, producing a point in (\ref{7.26.14.1}). 
\end{lemma}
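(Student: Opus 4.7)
The strategy is to enumerate the auxiliary choices made in the construction and verify that each one either does not change the resulting pair of triples at all, or shifts it by an element of the diagonal $H_{\widetilde \G}^3$. The choices are: (i)~the base points $x_i \in \gamma_i$; (ii)~the isotopy class of $E^+$, modulo windings around $\gamma_i$ and $\gamma_j$; (iii)~the concrete triangles $t^+ \subset \bS$ and $t^-\subset \bS^o$ with vertices at the $x_i$; (iv)~the lifts $A_i$ of $\beta_{\gamma_i}(x_i)$ to decorated flags; and, at the end, (v)~the action of the group $\Delta_{\widetilde \G}$, which is factored out in the definition of ${\cal D}^*_{\G,\bS}$.

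I would handle (iv) first because it gives the $H_{\widetilde \G}^3$-action. A lift $A_i \in {\cal L}_{\cal A}|_{x_i}$ of the flat section $\beta_{\gamma_i}$ is unique up to the right action of $H_{\widetilde \G}$; replacing $A_i$ by $A_ih_i$ multiplies the $i$-th entries of both triples in (\ref{11.5.06.2}) by the same $h_i$, since each of $t^+$ and $t^-$ has $x_i$ as a vertex (the points $x_i$ lie on the glued boundary of $\bS_{\cal D}$ and are fixed by $\sigma$). This produces exactly the diagonal $H_{\widetilde \G}^3$-action. For~(iii), two triangles on $\bS$ with the same vertices that both shrink to $t$ are isotopic rel vertices, and the transport of decorated flags through contractible regions of a local system is path-independent; the same holds on $\bS^o$, so (iii) contributes nothing.

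For (i) and (ii), the key observation is that the framing $\beta_{\gamma_i}$ picks out a flat section of the flag bundle on $\gamma_i$, whose stabiliser is a Borel subgroup $B_i \subset \widetilde\G$; the induced action on any lift $A_i$ to a decorated flag therefore factors through the quotient $B_i/U \cong H_{\widetilde\G}$. Sliding $x_i$ to a nearby $x_i'$ along $\gamma_i$ and transporting $A_i$ accordingly only deforms $t^+$ and $t^-$ into isotopic triangles, which by (iii) produces the same configurations. Once we traverse the full loop $\gamma_i$, transport returns $A_i$ to $A_i h_i$ with $h_i \in H_{\widetilde \G}$ the Cartan monodromy, which sits inside the diagonal $H^3$-action of~(iv); the analogous statement holds for the choice of isotopy class of $E^+$, where a winding around $\gamma_i$ applied to one of $t^\pm$ can be absorbed into a change of $A_i$ by an element of $H_{\widetilde \G}$. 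Finally, for (v), the group $\Delta_{\widetilde \G}$ acts through ${\rm Hom}(H_1(\bS_{\cal D},\Z), {\rm C}_{\widetilde\G})$, and the central elements ${\rm C}_{\widetilde \G} \subset H_{\widetilde \G}$ act on each decorated flag diagonally, hence trivially on the quotient (\ref{7.26.14.1}).

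The principal bookkeeping obstacle is coupling the $+$ and $-$ sides: one must verify that each elementary modification of the choices moves the pair $\bigl((a_1,a_2,a_3),(a_1^o,a_2^o,a_3^o)\bigr)$ by the same element of $H_{\widetilde \G}$ at each vertex, rather than by two independent elements, so that the change lies in the diagonal $H_{\widetilde \G}^3$ and not in $H_{\widetilde \G}^3 \times H_{\widetilde \G}^3$. This is exactly where the fact that $t^+$ and $t^-$ share the vertices $x_i$ (and the lift $A_i$) is essential, and it is the one point where care is needed; once this is in place, the four verifications above combine to give a well-defined point of~(\ref{7.26.14.1}).
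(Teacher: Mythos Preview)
Your proposal is correct and follows essentially the same approach as the paper, which records only two observations: the lift $A_i$ is defined up to $H_{\widetilde\G}$ (your~(iv)), and rotating $x_i$ around $\gamma_i$ alters $A_i$ by the monodromy acting through $H_{\widetilde\G}$, so that $a_i$ and $a_i^o$ change by the same element (your~(i)--(ii)). Your additional items (iii) and (v) are sound bookkeeping but go slightly beyond what the lemma asserts for a fixed representative $(\mathcal L,\beta)$; the paper does not treat the $\Delta_{\widetilde\G}$-invariance here.
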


\begin{proof}  Follows immediately from the two observations: 

(i) The decorated flag $A_i$ is well 
defined up to the action of the group $H_{\widetilde \G} $. 

(ii) Altering the triangle $t^+$ 
by rotating the point $x_i$  around the loop $\gamma_i$, we 
alter the decorated flag $A_i$ by the monodromy around the loop. 
So 
$a_i$ and $a^o_i$ are multiplied by the same 
element of $H_{\widetilde \G} $. 
\end{proof}

B) The case when one or two of the endpoints $p_i$, $p_j$ 
of $E$ are marked points is treated similarly, 
and is in fact simpler. 

\vskip 3mm
According to Section 8 of \cite{FG1}, 
there is a set $I_t$ parametrising 
the $A$-coordinates on the configuration space 
${\rm Conf}_3({\cal A}_{  \widetilde \G})$. 
So each  $i\in I_t$ provides two numbers: 
$A_i(a_1, a _2, a _3)$ and $A_i(a^o_1, 
a^o_2, a^o_3)$. The coordinate $B_{t,i}$ related 
to the triangle $t $ is defined as 
their ratio: 
\begin{equation} \label{11.5.06.4}
B_{t,i}:= \frac{A_i(a^o_1, 
a^o_2, a^o_3)}{A_i(a _1, a _2, a _3)}, \quad i\in I_t.
\end{equation}

\begin{lemma} \label{11.5.06.5}
Ratio (\ref{11.5.06.4}) does not depend on the choices 
in the construction of triples (\ref{11.5.06.2}). 
\end{lemma}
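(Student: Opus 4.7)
The strategy is to reduce the statement to Lemma~\ref{11.5.06.5q}, which already asserts that the pair of triples $(\ref{11.5.06.2})$ is a well-defined point in the quotient $(\ref{7.26.14.1})$, i.e.\ defined up to the \emph{diagonal} action of $H^3_{\widetilde \G}$ on ${\rm Conf}_3({\cal A}_{\widetilde \G}) \times {\rm Conf}_3({\cal A}_{\widetilde \G})$. Granting that lemma, it suffices to check that the ratio
$$
\frac{A_i(a^o_1,a^o_2,a^o_3)}{A_i(a_1,a_2,a_3)}
$$
is invariant under this diagonal action. This is immediate from the construction of the $A$-coordinates recalled in Section~8 of \cite{FG1}: each function $A_i$ on ${\rm Conf}_3({\cal A}_{\widetilde \G})$ is $H^3_{\widetilde \G}$-semi-invariant of some definite weight $\chi_i$, so that a diagonal action by $(h_1,h_2,h_3)$ multiplies both $A_i(a_1,a_2,a_3)$ and $A_i(a^o_1,a^o_2,a^o_3)$ by the same scalar $\chi_i(h_1,h_2,h_3)$, which cancels in the ratio.

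It remains to see that each of the three ambiguities in the construction of $(\ref{11.5.06.2})$ does indeed fall inside the diagonal $H^3_{\widetilde \G}$-action, not the full $H^3_{\widetilde \G}\times H^3_{\widetilde \G}$-action. The point is that the base points $x_i$ lie on $\gamma_i$, which is fixed pointwise by the involution $\sigma$, so the single fiber $({\cal L}_{\cal A})_{x_i}$ feeds simultaneously into both triples. Consequently: (a) replacing $A_i$ by $A_i h_i$ multiplies both $a_i$ and $a^o_i$ by the same $h_i$; (b) moving $x_i$ along $\gamma_i$ by parallel transport merely relabels $A_i$, and going once around $\gamma_i$ multiplies it by the monodromy of ${\cal L}$ around $\gamma_i$, which lies in $H_{\widetilde \G}$ because the framing $\beta_{\gamma_i}$ is preserved; (c) any two triangles $t^+$ shrinking to the fixed ideal triangle $t$ differ, up to isotopy rel.\ interior, by windings of the edges $E^+$ around the boundary loops $\gamma_j$, and since $t^-=\sigma(t^+)$ the mirror triangle is altered by the mirror windings, producing the same $H_{\widetilde \G}$-shift on $a^o_j$ as on $a_j$. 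Each of (a), (b), (c) is therefore a diagonal transformation, and the previous paragraph applies.

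The main technical point to pin down is (c): making precise that the space of triangles $t^+ \subset \bS$ with vertices at fixed $x_i$'s that shrink to the given $t \subset \bS'$ is a torsor over $\Z^{\{\text{vertices of } t \text{ on } \partial\bS\}}$, with the $\Z$-factor at a vertex acting by winding the adjacent edges once around $\gamma_i$, and that this winding action on the configuration coincides with applying the boundary monodromy to $A_i$. Once this is in place, the argument is reduced to the semi-invariance weight calculation in the preceding paragraph, which is routine.
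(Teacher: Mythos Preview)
Your proposal is correct and follows essentially the same approach as the paper: reduce to Lemma~\ref{11.5.06.5q} (well-definedness up to diagonal $H^3_{\widetilde\G}$), and then use that each $A_i$ is $H^3_{\widetilde\G}$-semi-invariant of a fixed weight $\chi_i$ so the ratio is invariant. The paper isolates this semi-invariance as a separate Lemma~\ref{11.5.06.7} and then simply says Lemma~\ref{11.5.06.5} follows immediately from Lemmas~\ref{11.5.06.7} and~\ref{11.5.06.5q}; your second and third paragraphs re-verify the content of Lemma~\ref{11.5.06.5q} (points (a), (b), (c) are exactly observations (i) and (ii) of its proof), which is redundant once you have granted that lemma but not incorrect.
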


{\bf Proof}. 
The only fact we need is the following property 
of the $A$-coordinates on  ${\rm Conf}_3({\cal A}_{ \widetilde  \G})$:

\begin{lemma} \label{11.5.06.7}
Each $i \in I_t$ 
determines a character $\chi_i$ of the group $H_{\widetilde \G}^3$ such that 
one has
$$
A_i(h_1a_1, h_2a_2, h_3a_3) = \chi_i(h_1, h_2, h_3)A_i(a_1, a_2, a_3), 
\quad  i\in I_t, ~~~ \forall (h_1, h_2, h_3) \in H_{\widetilde \G}^3.
$$
\end{lemma}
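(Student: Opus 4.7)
The plan is to extract this statement directly from the explicit construction of the $A$-coordinates on ${\rm Conf}_3({\cal A}_{\widetilde \G})$ given in Section 8 of \cite{FG1}. The key point to recall is that ${\cal A}_{\widetilde \G} = \widetilde \G / U$ is a right $H_{\widetilde \G}$-torsor over the flag variety ${\cal B}$, and for each fundamental weight $\omega$ of $\widetilde \G$ one has a canonical regular function $\Delta_\omega$ on $\widetilde \G$ -- the matrix coefficient pairing a highest weight vector of $V_\omega$ with a fixed highest weight covector -- satisfying $\Delta_\omega(gu) = \Delta_\omega(g)$ for all $u \in U$ and $\Delta_\omega(gh) = \omega(h)\,\Delta_\omega(g)$ for all $h \in H_{\widetilde \G}$. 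Hence $\Delta_\omega$ descends to a regular function on ${\cal A}_{\widetilde \G}$ which is a weight vector of weight $\omega$ for the right $H_{\widetilde \G}$-action.

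Next, I would recall that each $A$-coordinate $A_i$, $i \in I_t$, on ${\rm Conf}_3({\cal A}_{\widetilde \G})$ constructed in loc.\ cit.\ is expressed as a monomial in factors of the form $\Delta_{\omega}(g_{i_1 i_2 i_3})$, where each argument $g_{i_1 i_2 i_3}$ is built by the explicit ``snake'' (or string) procedure from lifts to $\widetilde \G$ of the three decorated flag representatives $(a_1, a_2, a_3)$. Because the construction is manifestly $U$-invariant on each input and polynomial in the lifts, and because each factor $\Delta_\omega$ is an $H_{\widetilde \G}$-eigenfunction by the previous paragraph, the full monomial $A_i$ is an eigenfunction of the diagonal right $H_{\widetilde \G}^3$-action on the three lifts.

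From this, the character $\chi_i : H_{\widetilde \G}^3 \to \mathbb{G}_m$ is read off directly: if $A_i = \prod_k \Delta_{\omega_k}(g_{k})$ is the defining expression, and if the snake construction attaches to each factor $\Delta_{\omega_k}$ a definite ``anchor vertex'' $s_k \in \{1,2,3\}$ so that only the representative $a_{s_k}$ enters nontrivially (the remaining two entering only through $U$-invariant data), then
\[
\chi_i(h_1,h_2,h_3) \;=\; \prod_k \omega_k(h_{s_k}).
\]
The one substantive verification required is therefore not the existence of a character but the statement that each factor in the monomial defining $A_i$ has a well-defined anchor vertex; this is precisely the content of the snake/string construction of \cite[Section 8]{FG1} and is the main obstacle in the sense that it is the only step that is not formal. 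Once this is checked, the lemma follows immediately.
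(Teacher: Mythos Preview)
Your overall plan — deriving the lemma from the explicit construction of the positive atlas in \cite[Section~8]{FG1} — coincides with the paper's: its entire proof is the one sentence ``Follows from the definition of positive atlas on ${\rm Conf}_3({\cal A}_{\widetilde \G})$ in Section~8 of \cite{FG1}.''

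Where your expansion goes wrong is the ``anchor vertex'' mechanism. You posit that $A_i=\prod_k\Delta_{\omega_k}(g_k)$ with each factor seeing the decoration of only one vertex $s_k\in\{1,2,3\}$, the other two entering through $U$-invariant data, so that $\chi_i(h_1,h_2,h_3)=\prod_k\omega_k(h_{s_k})$. This is false already for $\widetilde\G=SL_n$: the $A$-coordinate attached to $(a,b,c)$ with $a+b+c=n$ is the \emph{single} determinant formed from the first $a$ basis vectors of $a_1$, the first $b$ of $a_2$, and the first $c$ of $a_3$, and it scales under $a_j\mapsto h_ja_j$ by $\omega_a(h_1)\,\omega_b(h_2)\,\omega_c(h_3)$. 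All three decorations contribute; there is no single anchor. Likewise, for general $\widetilde\G$ the edge $A$-coordinates are components of the pairwise $H$-invariant $h(a_i,a_j)$, which by formula~(\ref{redx}) of the present paper transforms nontrivially under rescaling of \emph{both} $a_i$ and $a_j$. So the structural property you defer to \cite{FG1} is not actually a feature of that construction, and your displayed formula for $\chi_i$ cannot be right.

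The correct unpacking is simpler and needs no anchor-vertex claim. Each $A_i$ in \cite[Section~8]{FG1} is a generalized minor $\Delta_{u\omega,v\omega}$ evaluated at a group element $g(a_1,a_2,a_3)$; generalized minors are bi-eigenfunctions for left and right $H$-multiplication on $\widetilde\G$, and the construction is arranged so that $a_j\mapsto h_ja_j$ becomes left and/or right $H$-multiplication of $g$. Hence $A_i$ is an $H_{\widetilde\G}^3$-eigenfunction, with a character that in general has nontrivial components in all three $H$-factors. That is what the paper's one-line citation is pointing to.
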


{\bf Proof}. Follows from the definition of positive 
atlas on ${\rm Conf}_3({\cal A}_{\widetilde \G})$ 
in Section 8 of \cite{FG1}.  

\vskip 3mm
Lemma \ref{11.5.06.5}  follows immediately from Lemmas \ref{11.5.06.7} and 
\ref{11.5.06.5q}. 
\vskip 3mm

Denote by ${\rm D}_{\G, \bS}$ the cluster symplectic double 
of the cluster variety 
${\cal X}_{\G,\bS}$. We distinguish it from the moduli space 
${\cal D}_{\G, \bS}$. It is easy to check that
\be \la{dimeq}
{\rm dim}{\rm D}_{\G, \bS} = {\rm dim}{\cal D}^*_{\G, \bS} = 2  {\rm dim}{\cal X}_{\G, \bS}.
\ee

\begin{theorem} \label{MTD} There is a canonical rational surjective at the generic point 
map of spaces
\be \la{JUly20.14.1}
{\cal D}^*_{\G,\bS} \lra{\rm D}_{\G, \bS}.
\ee
\end{theorem}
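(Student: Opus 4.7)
The plan is to define the map (\ref{JUly20.14.1}) by matching the coordinate systems on both sides. Since ${\rm D}_{\G,\bS}$ is glued from split tori via cluster symplectic ${\cal D}$-transformations indexed by the seeds of ${\cal X}_{\G,\bS}$, and ${\cal X}_{\G,\bS}$ carries a seed for every ideal triangulation $T$ of $\bS'$ together with every cluster coordinate system on each $\mathrm{Conf}_3({\cal A}_{\widetilde \G})$ at a triangle, it suffices to:  (i) produce from each such seed a collection of rational functions $(X_E, B_{t,i})$ on ${\cal D}^*_{\G,\bS}$ whose number equals $2\dim {\cal X}_{\G,\bS}$; (ii) verify that they are algebraically independent, hence form a birational trivialisation ${\cal D}^*_{\G,\bS} \dashrightarrow \mathrm{T}_{\mathrm{seed}}$; and (iii) check that the birational isomorphisms induced by two different seeds are related by the cluster symplectic ${\cal D}$-transformations. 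Given (i)--(iii), the common birational identifications assemble into a canonical rational map to ${\rm D}_{\G,\bS}$, and surjectivity at the generic point is automatic from the dimension equality (\ref{dimeq}) together with algebraic independence.

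For (i), the $X_E$ are defined as pullbacks via the restriction projection ${\cal D}^*_{\G,\bS}\to {\cal X}_{\G,\bS}$, so they are manifestly rational; the $B_{t,i}$ are rational by Lemma \ref{11.5.06.5}, which reduces their well-definedness to the character property of Lemma \ref{11.5.06.7}. Combining the number of $X$-coordinates (one per edge, $\dim{\cal X}_{\G,\bS}$ total) with the number of $B$-coordinates (one for each $i\in I_t$, again $\dim{\cal X}_{\G,\bS}$ total in view of the standard count of $A$-coordinates per triangle), we match (\ref{dimeq}). Algebraic independence in (ii) I would establish by computing the joint differential at a generic point: the $X_E$ already separate the ${\cal X}$-factor, so one must check that, with the ${\cal X}$-data fixed, the $B$-coordinates give a birational parameterisation of the fibre of $\pi : {\cal D}^*_{\G,\bS}\to{\cal X}_{\G,\bS}$. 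This fibre is, after quotient by $\Delta_{\widetilde \G}$, a torsor for a torus of the expected dimension, and the $B_{t,i}$ restrict to its logarithmic characters via the character formula of Lemma \ref{11.5.06.7}.

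The heart of the argument, and the main obstacle, is (iii): the verification that the $B$-coordinates transform under the two elementary moves by the cluster $\cal D$-formulas of Section \ref{Sec6}, in particular (\ref{4.28.03.11x}). There are two types of moves. For a mutation internal to a fixed triangle $t$, corresponding to a cluster $A$-mutation on $\mathrm{Conf}_3({\cal A}_{\widetilde \G})$, the transformation law for the numerator and denominator in (\ref{11.5.06.4}) is the Fomin--Zelevinsky $A$-mutation law of \cite{FG1}, Section 8. Dividing, all monomials in $A$-coordinates become the corresponding monomials in $B$-coordinates, and the ratio produces exactly the cluster $B$-mutation formula, with the $X$-coefficient governed by the matching $A$-mutation coefficient; this is the content of the $SL_2$ check in Theorem \ref{MT100}(ii), which is the prototype for the general case. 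For a flip of triangulations, one argues in the same way using a Plücker-type $A$-relation on the quadrilateral $\mathrm{Conf}_4({\cal A}_{\widetilde \G})$ together with its mirror twin.

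Once (i)--(iii) are in place, the constructed rational maps patch into the required rational map ${\cal D}^*_{\G,\bS}\to{\rm D}_{\G,\bS}$. The map is surjective at the generic point because dimension equality (\ref{dimeq}) plus algebraic independence of $(X_E,B_{t,i})$ imply dominance onto the split torus of any seed. The main difficulty remains the combinatorial bookkeeping in step (iii), especially matching the $X$-weights appearing in cluster $B$-mutations with those arising from the Plücker-type $A$-relations; however, the strategy is a direct generalisation of the $SL_2$ computation and of the coefficient-matching arguments already used in \cite{FG1} to verify the cluster ${\cal A}$-structure on $\mathrm{Conf}_n({\cal A}_{\widetilde \G})$.
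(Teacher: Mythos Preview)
Your overall strategy matches the paper's: reduce the theorem to (Claim~1) algebraic independence of $(X^T_i,B^T_i)$ and (Claim~2) mutation compatibility. Your treatment of Claim~2 (your step (iii)) is essentially the paper's argument: lift the $B$-ratios along the surjection
\[
\varphi:\mathrm{Conf}_4({\cal A}_{\widetilde\G})\times\mathrm{Conf}_4({\cal A}_{\widetilde\G})\longrightarrow\bigl(\mathrm{Conf}_4({\cal A}_{\widetilde\G})\times\mathrm{Conf}_4({\cal A}_{\widetilde\G})\bigr)/H^4_{\widetilde\G},
\]
and reduce to the $A$-mutation relations from \cite{FG1} together with the $SL_2$ computation of Theorem~\ref{MT100}(ii).

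The gap is in your step (ii). You assert that the fibre of the single projection $\pi:{\cal D}^*_{\G,\bS}\to{\cal X}_{\G,\bS}$ is ``a torsor for a torus of the expected dimension'', and that the $B_{t,i}$ are its characters via Lemma~\ref{11.5.06.7}. This is not correct: that fibre still contains the framed local system on $\bS^o$ together with the gluing data, and is not a torus. Lemma~\ref{11.5.06.7} describes the $H^3_{\widetilde\G}$-action on $\mathrm{Conf}_3({\cal A}_{\widetilde\G})$, not this fibre.

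The paper's argument for Claim~1 uses more structure. One considers the projection $\pi$ to \emph{both} factors, landing in the diagonal $\Delta_{{\cal X}_{\G,\bS}}\subset{\cal X}_{\G,\bS}\times{\cal X}_{\G,\bS^o}$, and builds the commutative diagram~(\ref{11.15.06.1}). Commutativity for the $\bS^o$-factor is exactly the identity $X^o_i=X_i^{-1}\prod_jB_j^{(e_i,e_j)}$ of (\ref{zx1q1}), verified geometrically from the definition of the $B$-coordinates. Theorem~\ref{11.11.06.2} then identifies the fibre of $\pi$ over a generic point of $\Delta_{{\cal X}_{\G,\bS}}$ as a principal $H^k_{\widetilde\G}$-space, $k$ being the number of holes without marked points on $\bS$. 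Finally one checks that $\psi_{\bf q}$ transforms faithfully the $H^k_{\widetilde\G}$-action; together with the dimension equality (\ref{dimeq}) this yields surjectivity at the generic point. In short, the missing idea is: first use the $B$'s to recover the $X^o$'s via (\ref{zx1q1}), pass to the smaller fibration over $\Delta_{{\cal X}_{\G,\bS}}$, and only then invoke the torus-torsor structure coming from Theorem~\ref{11.11.06.2}.
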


The proof of Theorem \ref{MTD} will show that the map 
(\ref{JUly20.14.1}) is a finite cover 
at the generic point. 

\bc \la{5.20.15.1}
The map (\ref{JUly20.14.1}) is a birational isomorphism. 
\ec

A proof of Conjecture \ref{5.20.15.1} was claimed by D. Allegretti \cite{A1}. 

\begin{proof} The claim of Theorem \ref{MTD} is equivalent to the following: 

\begin{enumerate}

\item The rational functions 
$(X^T_i, B^T_i)$ on the 
moduli space ${\cal D}^*_{\G,\bS}$ assigned to an ideal triangulation $T$ of $\bS'$ 
are independent;

\item
The functions  $(X^T_i, B^T_i)$ for different ideal triangulations $T$ 
are related by cluster transformations for the symplectic 
double 
of the cluster ${\cal X}$-variety ${\cal X}_{\G,\bS}$. 
\end{enumerate}

We start from the proof of the Claim 2.

\paragraph{Proof of Claim 2.} Let us show that the  $(B,X)$-coordinates  on ${\cal D}^*_{\G,\bS}$ 
for different ideal triangulations $T$ are related by cluster double transformations.  

A flip $T \to T'$ at an edge $E$ of $T$ 
is decomposed into a composition of mutations as in Section 10 of \cite{FG1}. 
We need to show that this sequence of mutations 
transforms the $(B,X)$-coordinates assigned to $T$ 
to the ones for $T'$. 

The double $X$-coordinates are just the usual cluster $X$-coordinates 
on ${\cal X}_{\G, \bS}$, so the claim follows from the 
corresponding claim for the $X$-coordinates proved in {\it loc. cit}. 

Our $B$-coordinates on ${\cal D}^*_{\G,\bS}$ 
are defined as ratios 
of appropriate $A$-coordinates. Precisely, 
consider an ideal quadrilateral with vertices 
parametrised by the set $\{1,2,3,4\}$, so that the 
$E$ is the diagonal $(1,3)$, see Fig. \ref{double5}. We assigned to 
it a pair of configurations of decorated flags 
$$
(A_1, A_2, A_3, A_4) \quad \mbox{and} \quad (A^o_1, A^o_2, A^o_3, A^o_4). 
$$ 
defined up to the simultaneous action of the group $H_{\widetilde \G}^4$, given by 
$$
(A_1, A_2, A_3, A_4)\lms  (h_1A_1, h_1A_2, h_1A_3, h_1A_4), \qquad 
 (A^o_1, A^o_2, A^o_3, A^o_4)\lms  (h_1A^o_1, h_1A^o_2, h_1A^o_3, h_1A^o_4).
$$
The $A$-coordinates we use are cluster $A$-coordinates ({\it loc. cit.}).

To prove that our 
$B$-coordinates on ${\cal D}^*_{\G,\bS}$ 
behave under the mutations just as the cluster $B$-coordinates, 
consider the diagram
$$
{\rm Conf}_4({\cal A}) \times {\rm Conf}_4({\cal A}) 
\stackrel{\varphi}{\lra} 
\Bigl({\rm Conf}_4({\cal A}) \times {\rm Conf}_4({\cal A})\Bigr)/H_{\widetilde \G}^4 \stackrel{\pi}{\lra}
{\rm Conf}_4({\cal B}) \times {\rm Conf}_4({\cal B}). 
$$
The $B$-coordinates live on the middle space (by the same argument as in 
Lemma \ref{11.5.06.5q}).  
Since $\varphi$ is surjective, to  check that 
they transform as the cluster  $B$-coordinates it is sufficient to do it 
for the lifted coordinates $\varphi^*B_i$. 
To prove the latter we employ the fact  that the map $\varphi$ 
commutes with mutations: the computation checking this was carried out in the 
proof of the part ii) of Theorem  \ref{MT100}. 

\paragraph{Proof of Claim 1.}

Given a triangulation $T$, the rational functions $(X^T_i, B^T_i)$ 
provide a rational map $\psi_{\bf q}: {\cal D}_{\G,\bS} \lra {\cal D}_{\bf q}$, 
where ${\bf q}$ is the seed assigned to the triangulation $T$, and 
${\cal D}_{\bf q}$ is the corresponding seed torus. 
There is  a diagram
\begin{equation} \label{11.15.06.1}
\begin{array}{ccc}
{\cal D}_{\G,\bS}&\stackrel{\pi}{\lra}
&\Delta_{{\cal X}_{\G,\bS}} \subset{\cal X}_{\G,\bS}\times {\cal X}_{\G,\bS^o}\\
&&\\
\psi_{\bf q}\downarrow &&\downarrow \sim \\
&&\\
{\cal D}_{\bf q} &\stackrel{{\pi_{\bf q}}}{\lra}&
\Delta_{{\cal X}_{\bf q}} \subset {\cal X}_{\bf q}\times {\cal X}_{{\bf q}^{\rm o}}\\
\end{array}
\end{equation} 

Here $\Delta_{{\cal X}_{\G,\bS}} $ and $\Delta_{{\cal X}_{\bf q}}$ are the diagonals in the corresponding products, defined as the invariants 
of the involution $\sigma$. In particular, $\Delta_{{\cal X}_{\G,\bS}} = 
{\cal X}^{\rm red}_{\G,\bS_{\cal D} - \gamma} $. 

The map $\pi$ is the restriction map. 

The right vertical 
map is a birational isomorphism given by the rational map to the cluster seed torus for  
${\cal X}_{\G,\bS}\times {\cal X}_{\G,\bS^o}$. 

The square is commutative by construction: this is evident for the projection to 
${\cal X}_{\G,\bS}$, and follows from the definition of the $B$-coordinates 
 for the projection to 
${\cal X}_{\G,\bS^o}$. Indeed, the functions $X^o_i$ on ${\cal D}_{\G,\bS}$ defined by  
cluster formulas (\ref{zx1q1}) 
coincide with the  $X_i$-functions for the mirror triangulation $T^o$ on 
$\bS^o \subset \bS_{\cal D}$.

By Theorem \ref{11.11.06.2} the map $\pi$ 
at the generic point is a principal fibration with the structure group 
$H_{\widetilde \G}^k$, where $k$ is the number of holes without marked points on $\bS$.

The map $\psi_{\bf q}$  at the generic point is a map of fibrations. 
The map $\psi_{\bf q}$ transforms faithfully 
the action of the torus $H_{\widetilde \G}^k$. Thus, thanks to (\ref{dimeq}), the map $\psi_{\bf q}$ 
is surjective at the generic point.

\end{proof} 

\subsection{Functions $B(\alpha)$ on the 
moduli space ${\cal X}_{\G, \bS; \gamma}$} \la{SecBcoord}

We start with  a generalisation of the moduli space introduced 
in Section \ref{SecX}. 
Let $\bS$ be a decorated surface, and $\gamma'$ 
a simple lamination on it, given by a collection 
of simple non-intersecting loops. 
The {\it punctured boundary} of $\bS$ is the boundary of $\bS$ minus the 
marked points:
$$
\partial^*\bS := \partial\bS - \{\mbox{\rm marked points}\}.
$$
Let us set
$$
\gamma:= \gamma' \cup \partial^*\bS.
$$

\bd
The moduli space ${\cal X}_{\G, \bS; \gamma}$ parametrises 
pairs $({\cal L}, \beta)$ where ${\cal L}$ is a $\G$-local system on 
$\bS$, and $\beta$ is a framing, given by a flat section of the associated flag local system ${\cal L}_{\cal B}$ on $\gamma$. 
\ed

Take an ordered collection of points $z_1, ..., z_k$ 
on  $\gamma$. For each consequitive 
pair of points $z_iz_{i+1}$ consider an arbitrary path $\alpha_{i, i+1}$ 
on $\bS$ which does not intersect $\gamma$ and connects the points 
$z_i$ and $z_{i+1}$ . It is oriented from $z_i$ towards $z_{i+1}$. 
Travelling along  these paths we get a loop 
$$
\alpha(z_1, ..., z_k) = \alpha_{1,2} \circ 
\alpha_{2, 3} \circ \ldots \circ \alpha_{k, 1}. 
$$
The loop can have selfintersections. We consider it up to isotopies 
such that: 
$$
\mbox{the paths $\alpha_{i, i+1}$ end on $\gamma$, and their interier parts 
do not intersect $\gamma$.}  
$$

 Let $({\cal L}, \beta)$ be a framed $\G$-local system on $(\bS; \gamma)$. 
Just as in the definition of the $B$-coordinates, 
pick a decorated flag $A_{z_i}$ in the fiber of  the decorated flag local system  ${\cal L}_{\cal A}$ 
at the point $z_i$ which projects to the flag $B_{z_i}$ 
in the fiber of
${\cal L}_{\cal B}$ over $z_i$ provided by 
 the framing $\beta$. Transporting  
the decorated flags 
$A_{z_i}$ and $A_{z_{i+1}}$ along the arc $\alpha_{i, i+1}$ 
into the same point of the arc, 
we get a 
configuration  of two decorated flags, denoted by 
$(A_{z_i}, A_{z_{i+1}})_\alpha$. 

Recall the $H$-invariant, given by 
the birational isomorphism
$$
h: {\rm Conf}_2({\cal A}_\G) \stackrel{\sim}{\lra }H. 
$$
It has the following property:
\be \la{redx}
h(tA_1, A_2) = th(A_1, A_2), ~~~~ h(A_1, tA_2) = 
w_0(t)h(A_1, A_2), ~~t \in H.
\ee

We apply the $H$-invariant map to 
the configuration $(A_{z_i}, A_{z_{i+1}})_\alpha$, getting
$$
h_\alpha(A_{z_i}, A_{z_{i+1}}):= h((A_{z_i}, A_{z_{i+1}})_\alpha)\in H. 
$$
Consider an alternating product
\be \la{Bf}
B(\alpha):= \frac{h_\alpha(A_{z_1}, A_{z_{2}})h_\alpha(A_{z_3}, A_{z_{4}}) \ldots }
{w_0h_\alpha(A_{z_2}, A_{z_{3}}) w_0h_\alpha(A_{z_4}, A_{z_{5}}) \ldots w_0h_\alpha(A_{z_{k}}, A_{z_{1}})}.
\ee
Thanks to (\ref{redx}),  rescaling the flag $A_{z_i} \lms tA_{z_i}$ we do not change the $B(\alpha)$. So we get a rational function $B(\alpha)$ on the space 
${\cal X}_{\G, S; \gamma}$, which assigns to a framed $\G$-local system 
$({\cal L}, \beta)$ the value of the invariant  $B(\alpha)$. 

The cyclic shift of the points $s: (z_1, ..., z_k) \lms (z_2 , ..., z_k, z_1)$  
changes the $B(\alpha)$ as follows:
$$
B(s(\alpha))= w_0(B(\alpha)^{-1}).
$$

\paragraph{Examples.} 1. If our surface is the double $\bS_{\cal D}$, and 
$\alpha_E$ is a loop on $\bS_{\cal D}$ obtained by doubling 
an ideal edge $E$ on the original surface, then $B(\alpha)$ is just the 
$B$-coordinate function $B_E$. 

2. When $\G = SL_2$ and 
$\alpha$ is a $4$-gon on $\bS$ with vertices 
at the  $\gamma$, the function $B(\alpha)$ 
is a generalisation of the $X$-coordinate. To get the latter 
we restrict to a contractable quadrilateral. 
Then $B(\alpha)$ is the cross-ratio of the configuration of 
four points on $P^1$ 
provided by the framing at the vertices of the quadrilateral. 
\vskip 3mm

It would be interesting to calculate the function $B(\alpha)$ 
in a cluster double coordinate system related to an ideal
 triangulation of the half $\bS\subset \bS_{\cal D}$. 

For $\G=SL_2$ the functions $B(\alpha)$ is studied by Dylan Allegretti \cite{A}, who discovered their close relationship  
to the $F$-polynomials of Fomin-Zelevinsky \cite{FZIV}.

\section{A complex analog of Fenchel-Nielsen coordinates}

\subsection{Construction of coordinates for $G=SL_2$} 

Let $S$ be a closed surface, i.e.  a surface without boundary and punctures. 
Let $g$ be the genus of $S$. We assume that $g>1$. Consider 
a collection $3g-3$ simple non-intersecting loops   
on $S$ which determine a pair of pants decomposition 
of $S$: cutting $S$ along these loops we get $2g-2$ pair of pants. 
The gluing pattern is described by a trivalent graph $\Gamma$: its vertices $v$ 
correspond to pairs of pants denoted ${\cal P}_v$, and its edges $E$ correspond to  
the loops, denoted $\alpha_E$. So such a graph $\Gamma$ has 
Betti number $g$; it has $2g-2$ vertices and $3g-3$ edges. 
Denote by ${\cal V}_\Gamma$ and ${\cal E}_\Gamma$ the sets of the vertices and edges of the graph $\Gamma$. 

The homology classes $[\alpha_E]$ of the loops 
generate a Lagrangian sublattice ${\rm L}_\alpha$ of $H_1(S, \Z)$. Indeed, 
each pair of pants gives a relation, and there is a single relation between these relations.  
So its rank is $(3g-3)-(2g-2)+1=g$. 
Denote by $\Z[X]$ the free abelian group 
generated by a set $X$. We arrive at  an isomorphism of abelian groups
\be \la{Lagiso}
{\rm Coker}\Bigl(\Z[{\cal V}_\Gamma] \lra \Z[{\cal E}_\Gamma]\Bigr) = {\rm L}_\alpha \subset H_1(S, \Z).
\ee

Let us define a dual collection of loops  $\{\beta_E\}$. 
Let us choose once forever an orientation of the graph $\Gamma$. 
Let $v_E^+$ and $v_E^-$ be the vertices of the edge $E$, so that $E$ is oriented from  $v_E^+$ to $v_E^-$. 
One can have $v_E^+ = v_E^-$, in which case $E$ is a loop. 

The  pairs of pants 
${\cal P}_{v_E^+}$ and ${\cal P}_{v_E^-}$ contain $\alpha_E$. They coincide if $E$ is a loop. 
Denote by $\beta_E^+$ a half loop on ${\cal P}_{v_E^+}$ shown on the top left of Fig \ref{id2}. 
It intersects the loop $\alpha_E$ at two points.  Denote by $\beta_E^-$ a similar 
half loop on ${\cal P}_{v_E^-}$ intersecting $\alpha_E$ at the same two points. 
The orientation of the 
half loops does not play any role. Set $\beta_E:= \beta_E^+\cup \beta_E^-$. 
See Fig. \ref{id2} and \ref{fn1}.

\begin{figure}[ht]
\centerline{\epsfbox{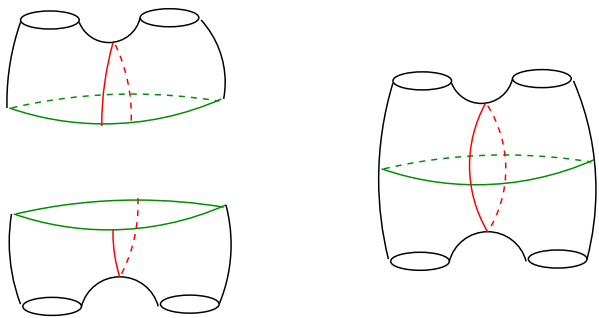}}
\caption{$E$ is not a loop. Gluing two pairs of pants along a green boundary loop we get four holed sphere. 
The complimentary red loop $\beta_E$ is obtained by gluing two red half loops. }
\label{id2}
\end{figure}

\begin{figure}[ht]
\centerline{\epsfbox{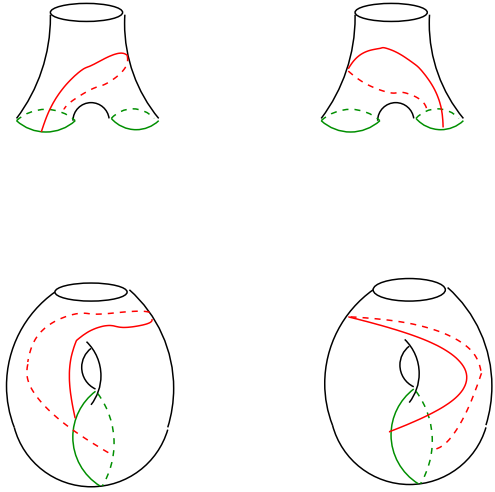}}
\caption{$E$ is  a loop. Gluing green boundary loops on a pair of pants we get a torus with a hole $T_E^o$. 
 On the left: a half loop on a pair of pants ending on the left boundary circle 
 is glued into a  half loop $\beta_E^+$ on $T_E^o$. 
On the right: a similar construction of a  half loop $\beta_E^-$. The loop $\beta_E$ on $T_E^o$  
is the union of 
$\beta_E^+ \cup \beta_E^-$.}
\label{fn1}
\end{figure}

So we get two collections of loops: 
$\{\alpha_E\}$ and $\{\beta_E\}$. 

Let $\alpha$ be a lamination on $S$ given by the union of the loops $\alpha_E$:
$$
\alpha = \cup_{E\in{\cal E}_\Gamma } \alpha_E.
$$
Then the coarse moduli space ${\rm Loc}_{SL_2, S; \alpha}$ from Section \ref{Sec5} 
parametrizes pairs $({\cal L}, \varphi)$ where ${\cal L}$ is a twisted 
$SL_2$-local systems on $S$, and $\varphi$ is a framing of ${\cal L}$ 
over the lamination $\alpha$, which amounts to a  choice of an eigenline of the monodromy 
of ${\cal L}$ along each of the loops $\alpha_E$. 

Forgetting the framing, we get  a $2^{3g-3}:1$ cover of the coarse moduli space ${\rm Loc}_{SL_2, S}$ of 
twisted $SL_2$-local systems on $S$:
$$
\pi_\alpha: {\rm Loc}_{SL_2, S; \alpha} \lra {\rm Loc}_{SL_2, S}.
$$

\paragraph{Complex analogs of Fenchel-Nielsen coordinates for  $SL_2$.}
Given two complimentary sets of loops   $\{\alpha_E, \beta_E\}$, 
let us define 
a collection of rational functions $\{M_E, B_E\}$ 
on the space ${\rm Loc}_{SL_2, S; \alpha}$, parametrized by the edges $E$ of the graph $\Gamma$.

A) Take a   loop $\alpha_E$. Our choice of an
 orientation of the edge $E$ provides an 
orientation of the loop $\alpha_E$ such that the pair of pants ${\cal P}_{v_E^+}$ is on the left. 
 
Then, given 
a twisted framed $SL_2$-local system $({\cal L}, \varphi)$ on $S$, the 
monodromy along the loop $\alpha_E$  preserves the one dimensional subspace determined by the framing. 
The eigenvalue  $\mu_E$ of the monodromy in this subspace provides a function 
$$
M_E: {\rm Loc}_{SL_2, S; \alpha} \lra \C^*, ~~~~ ({\cal L}, \varphi) \lms \mu_E.
$$

B) Take a  loop $\beta_E$. Let us define a rational function 
\be \la{RFBj}
B_E: {\rm Loc}_{SL_2, S; \alpha} \lra \C^*.
\ee
The loops $\beta_E$ and $\alpha_E$ intersect at two points $x,y$. So 
$\beta_E - \{x \cup y\}$  is a union of two arcs:
$$
\beta_E - \{x \cup y\} = \beta_E^+ \cup  \beta_E^-.
$$

Take a non-zero vector $v_x$ at the eigenline $L_x \subset {\cal L}_x$ at the point $x$ 
 of the monodromy of ${\cal L}$ 
along the loop $\alpha_E$. It is the eigenline providing a framing over $\alpha_E$. 
Take a similar vector $v_y \in L_y \subset {\cal L}_y$ 
over the point $y$. Moving the vectors $v_x, v_y$ along the arc $\beta_E^+$ 
to the same point, we define a number
\be \la{4.8.14.1}
\Delta_{\beta_E^+}(v_x, v_y)\in \C^*.
\ee
We use the fact that ${\cal L}$ is a \underline{twisted} $SL_2$-local system on $S$: otherwise 
number (\ref{4.8.14.1}) is well defined  only up to a sign. 
Similarly, using the arc $\beta_E^-$ we get a number $\Delta_{\beta_E^-}(v_x, v_y)\in \C^*$. 
Set 
\be \la{formulaBE}
B_E:= \frac{\Delta_{\beta_E^+}(v_x, v_y)}{\Delta_{\beta_E^-}(v_x, v_y)}.
\ee
Evidently the ratio $B_E$ does not depend on the choice of 
the non-zero vectors $v_x$ and $v_y$. So we get a rational function
(\ref{RFBj}).

Changing the orientation of an edge $E$ results in inversion of  both $M_E$ and $B_E$. 

The functions $\{M_E, B_E\}$ do not 
define a rational coordinate system on the space ${\rm Loc}_{SL_2, S; \alpha}$ for the following reason. 
 We are going to show that there is a canonical non-trivial action 
of a group ${\rm Hom}({\rm L}_\alpha,\Z/2\Z) =  (\Z/2\Z)^g$ on ${\rm Loc}_{SL_2, S; \alpha}$ which preserves 
the functions $\{M_E, B_E\}$. 

\subsection{Complex analogs of Fenchel-Nielsen coordinates for arbitrary $G$.} 
Recall that orientations of the edges $E$ provide orientations of the loops $\alpha_E$. 
Using these orientations, 
the semi-simple parts of the monodromies along the loops $\alpha_E$ provide a map
$$
\{M_E\}: {\rm Loc}_{G, S; \alpha} \lra H^{{\cal E}_\Gamma} \stackrel{\sim}{=} H^{3g-3}.
$$
Let ${\rm Conf}^\times_2({\cal A})\subset {\rm Conf}_2({\cal A})$ be the subspace parametrization pairs of decorated flags in generic position. There is a canonical isomorphism
$$
\Delta: {\rm Conf}^\times_2({\cal A}) \stackrel{\sim}{\lra} H. 
$$
Using this, and generalizing the set up of formula (\ref{formulaBE}) from $SL_2$ to arbitrary group $G$ 
by replacing the vectors $v_x, v_y$ there by arbitrary decorated flags $A_x, A_y$ 
at the points $x,y$ which lift the framings over these points, 
 the loop $\beta_E$ provides an  $H$-invariant:
$$
B_E:= \frac{\Delta_{\beta_E^+}(A_x, A_y)}{\Delta_{\beta_E^-}(A_x, A_y)}\in H.
$$
So we get a map 
$$
\{B_E\}: {\rm Loc}_{G, S; \alpha} \lra H^{{\cal E}_\Gamma} \stackrel{\sim}{=} H^{3g-3}.
$$
Changing the orientation of an edge $E$ results in the inversion $h \lms h^{-1}$ of  both $M_E$ and $B_E$. 

\paragraph{The space ${\cal L}_{G, S; \alpha}$ on which the coordinates live.}

Let us formulate first our results. Let $G$ be any split semi-simple algebraic group.

Recall the Lagrangian sublattice ${\rm L}_\alpha\subset H_1(S, \Z)$ generated by the 
loops $\{\alpha_E\}$.

Consider the following finite abelian group: 
\be \la{gracteff}
{\rm Hom}({\rm L}_\alpha,  {\rm Cent}(G)) \stackrel{\sim}{=}{\rm Cent}(G)^g.
\ee
\bp \la{9.18.14.1}
The group ${\rm Hom}({\rm L}_\alpha,  {\rm Cent}(G))$ acts effectively at the generic point of the coarse moduli space ${\rm Loc}_{G, S; \alpha}$.
\ep

\bd \la{9.19.14.1} The space ${\cal L}_{G, S; \alpha}$ is the quotient of 
${\rm Loc}_{G, S; \alpha}$ by the action of the group (\ref{gracteff}). 
\ed

So it is related to the original moduli space ${\rm Loc}_{G, S}$ via the following  diagram, where 
$c_G$ is the order of the center of $G$, and the numbers at the vertical arrows are their degrees: 
$$
\begin{array}{ccccc}
&&{\rm Loc}_{G, S; \alpha}&&\\
&&&&\\
&c_G^{3g-3}\swarrow &&\searrow c_G^{g}&\\
&&&&\\
{\rm Loc}_{G, S}&&&&{\cal L}_{G, S; \alpha}
\end{array}
$$
Notice that the space of $G$-local systems on $S$ in general 
is not a rational variety, i.e. it is not birationally isomorphic to a projective space. 
 So it can not have a rational coordinate system since the latter, by definition, provides 
a birational isomorphism with a projective space. 

\bt \la{9.18.14.2}
a) The functions $\{M_E, B_E\}$ 
define a rational coordinate system on  ${\cal L}_{SL_2, S; \alpha}$. 

b) The  space ${\cal L}_{G, S; \alpha}$ 
is rational. The functions $\{M_E, B_E\}$ are a part of a rational coordinate system on  ${\cal L}_{G, S; \alpha}$. 

\et

Below we prove simultaneously Proposition \ref{9.18.14.1} and Theorem \ref{9.18.14.2}.  
\begin{proof} 

Denote by ${\cal M}_{G, {\cal P}_v}$ the coarse 
moduli space of twisted 
$G$-local systems on a pair of pants 
${\cal P}_v$ equipped with framings at the three boundary loops. 
We denote by ${\cal L}_v$ a point of  ${\cal M}_{G, {\cal P}_v}$. 
Consider the subspace
$$
{\cal M}_{G, \Gamma} \subset \prod_{v \in \Gamma}{\cal M}_{G, {\cal P}_v}
$$
defined by the condition that for each loop $\alpha_E$ the monodromies  of the local systems 
${\cal L}_{v_E^+}$ and ${\cal L}_{v_E^-}$ around $\alpha_E$ coincide. 
There is a surjective restriction map 
$$
{\rm Res}: {\rm Loc}_{G, S; \alpha} \lra {\cal M}_{G, \Gamma}. 
$$

The automorphism group of a generic framed $G$-local system on a space
 with non-abelian fundamental group is the center ${\rm Cent}(G)$ of the group $G$. 

There is a canonical map 
$$
{\rm Cent}(G)^{{\cal V}_\Gamma} \lra {\rm Cent}(G)^{{\cal E}_\Gamma}.  
$$
It assigns to a collection of central element $\{c_v\}$ at the vertices $v$ a collection 
of central elements $\{c_{{E}}\}$ at the oriented edges ${{E}}$ where $c_{{E}}:= c_{s({{E}})}/c_{t({{E}})}$, where 
$s({{E}})$ is the source of the arrow ${{E}}$, 
and $t({{E}})$ is its target. Then, since ${\rm Cent}(G)\subset {\rm H}$, one has 
\be \la{image}
{\rm Im}\Bigl({\rm Cent}(G)^{{\cal V}_\Gamma} \lra {\rm Cent}(G)^{{\cal E}_\Gamma}\Bigr) \subset {\rm H}^{{\cal E}_\Gamma}.
\ee

\bl \la{rationalf}
The group 
$$
{\rm H}^{{\cal E}_\Gamma}/{\rm Im}\Bigl({\rm Cent}(G)^{{\cal V}_\Gamma} \lra {\rm Cent}(G)^{{\cal E}_\Gamma}\Bigr)
$$
acts simply transitively on the fiber of the map ${\rm Res}$ over a generic point of 
${\cal M}_{G, \Gamma}$.
\el

\begin{proof}
Given a collection of framed twisted $G$-local systems $\{{\cal L}_{v}\}$ on pairs of pants ${\cal P}_{v}$ 
whose monodromies around all loops $\alpha_E$ coincide, and given any collection of isomorphisms 
\be \la{coliso}
\{i_E\} \in \prod_{E \in {\cal E}_\Gamma}{\rm Isom}({{\cal L}_{s(E)}}_{|\alpha_E} \lra {{\cal L}_{t(E)}}_{|\alpha_E}), 
\ee
one can glue a  twisted 
 $G$-local system ${\cal L}$ on $S$ with a framing on the  $\alpha$. 
So a gluing data $(\{{\cal L}_{v}\}, \{i_E\} )$ determines 
uniquely such an ${\cal L}$ which restricts to the collection $\{{\cal L}_v\}$. 
Let us find out when two gluing data $(\{{\cal L}_{v}\}, \{i_E\} )$ and $(\{{\cal L}'_{v}\}, \{i'_E\} )$ 
determine isomorphic   ${\cal L}$'s on $S$.

The automorphism group of a generic twisted framed $G$-local system 
on a circle is the Cartan group $H$ of $G$ - the centralizer of a generic element of $G$. 
Therefore for a generic $G$-local system, the
 group ${\rm H}^{{\cal E}_\Gamma}$ acts simply transitively on the space of gluing isomorphisms (\ref{coliso}). 
The group ${\rm Cent}(G)$ acts by automorpisms of ${\cal L}_{v}$ for each vertex $v$ of $\Gamma$. 
So the group ${\rm Cent}(G)^{{\cal V}_\Gamma}$ acts by automorphisms of the collection 
$\{{\cal L}_{v}\}$. It does not change the isomorphism classes 
of the ${\cal L}_{v}$'s, but does change the collection of isomorphisms (\ref{coliso}). 
Evidently the group ${\rm Cent}(G)^{{\cal V}_\Gamma}$ acts on isomorphisms (\ref{coliso}) 
via its image in ${\rm Cent}(G)^{{\cal E}_\Gamma}$. For generic ${\cal L}_v$ one has 
$$
{\rm Aut}({\cal L}_v) = {\rm Cent}(G).
$$
So the 
 isomorphism classes of the glued ${\cal L}$'s on $S$ are the orbits of  the group (\ref{image}). 
\end{proof}

Now we can finish the proof of Proposition \ref{9.18.14.1}. 
Indeed, it is clear from (\ref{Lagiso}) that one has 
$$
{\rm Cent}(G)^{{\cal E}_\Gamma}/{\rm Im}\Bigl({\rm Cent}(G)^{{\cal V}_\Gamma} \lra {\rm Cent}(G)^{{\cal E}_\Gamma}\Bigr) 
{=}{\rm Hom}({\rm L}_\alpha,  {\rm Cent}(G)).
$$

Let us prove now Theorem \ref{9.18.14.2}.

a) Recall the restriction  map
$
{\rm Res}: {\cal L}_{G, S; \alpha} \lra {\cal M}_{G, \Gamma}.
$
 
If $G=SL_2$, then ${\cal M}_{G, {\cal P}_v} = {\Bbb G}_m^3$, 
with the monodromies around the three boundary loops providing the isomorphism. The fibers over the generic points are rational by Lemma \ref{rationalf}. 
So the total space ${\cal L}_{SL_2, S; \alpha}$ is rational.  Moreover, 
the restriction map for $SL_2$ boils down to  the monodromies of the twisted framed $SL_2$-local systems 
over the loops of the lamination $\alpha$:
$$
{\rm Res} = M_\alpha: {\rm Loc}_{SL_2, S; \alpha} \lra ({\Bbb G}_m)^{3g-3}.
$$
Rescaling a component $i_E$ of the  gluing data by $\lambda$ rescales 
$B_E$ by $\lambda^2$, and leaves untouched the other $B$-coordinates. 
Therefore given a generic fiber  
$M_\alpha^{-1}(x)$, the functions $\{B_E\}$ provide its isomorphism 
with a torus
$$
\{B_E\}: {M_\alpha}^{-1}(x)\lra ({\Bbb G}_m/\pm 1)^{3g-3}.
$$
Definition \ref{9.19.14.1} of the space ${\cal L}_{SL_2, S; \alpha}$ 
kills the action of the group $(\pm 1)^{3g-3}$ on the isomorphisms $i_E$. 
So the functions $(M_E, B_E)$ separate generic points, providing 
a birational isomorphism
$$
(M_E, B_E): {\rm Loc}_{SL_2, S; \alpha} \stackrel{\sim}{\lra} ({\Bbb G}_m)^{6g-6}.
$$

b) Given a pair of pants ${\cal P}$, the moduli space ${\cal M}_{G, {\cal P}}$ 
of framed twisted $G$-local systems on 
${\cal P}$ has a positive structure. In the case when ${\rm Cent}(G)$ is trivial this was 
proved in \cite{FG1}. The monodromy around the three boundary loops 
provides a positive map 
$$
\mu_{\cal P}: {\cal M}_{G, {\cal P}} \lra H^3.
$$
Its fiber ${\cal M}^{\rm un}_{G, {\cal P}}$ over the unit element ${\cal M}^{\rm un}_{G, {\cal P}}$ 
parametrizes the subspace of unipotent framed $G$-local systems. It is a positive space. 
In particular it is rational. 
One can split non-canonically the map $\mu_{\cal P}$, getting a positive projection 
$\nu_{\cal P}: {\cal M}_{G, {\cal P}} \lra {\cal M}^{\rm un}_{G, {\cal P}}$ and 
therefore a positive birational isomorphism 
$$
(\nu_{\cal P}, \mu_{\cal P}): {\cal M}_{G, {\cal P}} \lra {\cal M}^{\rm un}_{G, {\cal P}} \times H^3.
$$
Therefore there is a birational isomorphism 
\be \la{Decomap}
(\{\nu_{{\cal P}_v}\}, \{M_E, B_E\}): {\rm Loc}_{G, S; \alpha} 
\lra \prod_{v \in {\cal V}_\Gamma}{\cal M}^{\rm un}_{G, {\cal P}_v} 
\times (H\times H)^{{\cal E}_\Gamma}.
\ee

Then the arguments are just as in the $SL_2$ case. 
\end{proof}

\section{Appendix: The quantum cluster symplectic double} \la{Sec6}

\begin{definition}
A {\it quiver}  is a datum 
$$
{\bf q} = \Bigl(\Lambda, \{e_i\}, (\ast, \ast)\Bigr). 
$$
 Here 
$\Lambda$ is a  lattice, $\{e_i\}$ is its basis, 
and $(\ast, \ast)$ a skew-symmetric $\Z$-valued 
bilinear form on $\Lambda$.

A mutation of a quiver ${\bf q}$ 
in the direction of a basis vector $e_k$ is a new quiver $$
\widetilde {\bf q} = \Bigl(\Lambda, \{\widetilde e_i\}, (\ast, \ast)\Bigr).
$$
It has 
 the same  lattice and form as the original quiver ${\bf q}$, and a new basis $\{\widetilde e_i\}$  defined by
\begin{equation} \label{12.12.04.2a}
\widetilde e_i := 
\left\{ \begin{array}{lll} e_i + (e_{i}, e_k)_+e_k
& \mbox{ if } &  i\not = k, ~~ (\alpha)_+= {\rm max}(0, \alpha);\\
-e_k& \mbox{ if } &  i = k.\end{array}\right.
\end{equation}
\end{definition}

Consider the double $\Lambda_{\cal D}$ of the lattice $\Lambda$:
$$
\Lambda_{\cal D}:= \Lambda \oplus \Lambda^\vee, \qquad \Lambda^\vee:= {\rm Hom}(\Lambda, \Z).
$$
It gives rise to a split algebraic torus 
$$
{\mathcal T}_{\Lambda}:= {\rm Hom}(\Lambda_{\cal D}, \C^*).
$$
 
The basis $\{e_i\}$ of $\Lambda$
 provides the dual basis $\{e^{\vee}_j\}$ of $\Lambda^\vee$. So a quiver ${\bf q}$ provides a basis 
\be \la{12.4.12.1}
\mbox{$\{e_i, e^{\vee}_j\}$ of $\Lambda_{\cal D}$}.
\ee
The basis (\ref{12.4.12.1}) of the lattice  $\Lambda_{\cal D}$ gives rise to 
 the coordinates $\{X_i, B_j\}$ of the torus ${\mathcal T}_{\Lambda}$.

\vskip 3mm
The lattice 
$\Lambda_{\cal D}$ with the form $(\ast, \ast)_{\cal D}$ gives rise to the quantum torus algebra 
${\bf T}$. Precisely, the basis (\ref{12.4.12.1}) gives rise to a set of the ``quantum coordinates'' 
$(X_i, B_j)$ -- generators of the quantum torus algebra 
${\bf T}$ -- satisfying    
the relations 
$$
B_iB_j=B_jB_i, \quad q^{-1} X_iB_i  = 
q B_iX_i, \quad  B_i X_j = 
X_jB_i, \mbox{~$i \not = j$}, \quad  
q^{-(e_i, e_j)} X_i X_j = 
q^{-(e_j, e_i)} X_jX_i.
$$

Denote by $ {\Bbb T}$ the (non-commutative) fraction field of ${\bf T}$. 
Recall the quantum dilogarithm power series, although known as the quantum exponential:
$$
{\bf \Psi}_q(x)= 
\prod_{k=1}^{\infty} (1+q^{2k-1}x)^{-1}. 
$$ 

Let us introduce the following notation
$$
{\Bbb B}_k^+:= \prod_{i~|~(e_k, e_i)>0}B_i^{(e_k, e_i)}, \qquad
{\Bbb B}_k^-:= \prod_{i~|~(e_k, e_i)<0}B_i^{-(e_k, e_i)}.
$$
\begin{equation}\label{zx1q1}
X^o_i:= X_i ~ \frac{{\Bbb B}_k^+}{{\Bbb B}_k^-} = X_i
\prod_{j\in I}B_j^{(e_i, e_j)}. 
\end{equation}
Notice that all variables which appear in the definition of $X^o_i$ commute. 

\begin{theorem-definition} \label{4.28.03.11ccc}
The conjugation by 
${\bf \Psi}_{q}(X_k)/{\bf \Psi}_{q}(X^o_k)$ provides an automorphism 
\be \la{4.14.12.1}
\mu^*_{e_k}: {\Bbb T} \lra {\Bbb T}.
\ee 
\end{theorem-definition}
Notice that, although ${\bf \Psi}_{q}(X_k)$ is a power series, we get 
a birational automorphism. 

\paragraph{Mutations on the classical level.} Let us apply the quantum automorphism (\ref{4.14.12.1}) to the generators $\{\widetilde X_i, \widetilde B_j\}$ 
assigned  to the mutated basis 
$(\widetilde e_i, \widetilde e^\vee_j)$, express the result  via 
the generators $\{X_i, B_j\}$  
assigned finally to the original basis $(e_i, e^\vee_j)$, and set $q=1$. 
Then we calculate the obtained birational transformation of the 
torus ${\mathcal T}_{\Lambda}$:
\be \la{4.14.12.2}
\mu^*_{e_k}: \Q({\mathcal T}_{\Lambda}) \lra \Q({\mathcal T}_{\Lambda}).
\ee

\bt The action of the birational automorphism (\ref{4.14.12.2}) on  the coordinates 
$\{\widetilde X_i, \widetilde B_j\}$, expressed in terms of the coordinates  $\{X_i, B_j\}$ is given by 
\begin{equation}\label{zx1qt}
\mu^*_{e_k}: \widetilde X_{i} \lms \left\{\begin{array}{lll} X_k^{-1}& \mbox{ if $i=k$}  \\
 X_i(1+X_k^{-\sgn(\varepsilon_{ik})})^{-\varepsilon_{ik}}, & \mbox{ if $i \not = k$}.
\end{array} \right.
\end{equation}
\begin{equation} \label{4.28.03.11x}
\mu^*_{e_k}: \widetilde B_k \lms 
\frac{{\Bbb B}_k^- + X_k{\Bbb B}_k^+}{B_k(1+X_k)}, \qquad \widetilde B_j \lms B_j~~\mbox{ if $j\not =k$}.
\end{equation}
\et

There is  a symplectic form on the torus ${\cal T}_{\Lambda}$, given in coordinate $(X_i, B_j)$ by 
\begin{equation}\label{ELLs}
\Omega_{\bf q}= -\frac{1}{2}\sum_{i,j} 
(e_i, e_j)\cdot  d\log B_i \wedge d\log B_j - \sum_{i}  d\log B_i \wedge d\log X_i.
  \end{equation}
The Poisson structure provided by the symplectic form is given in coordinates $\{X_i, B_j\}$ by 
\begin{equation}\label{zx1}
\{B_i, B_j\}= 0, \quad \{X_i, B_j\}= \delta_{ij}X_iB_j, \quad \{X_i, X_j\} = 
\varepsilon_{ij}X_iX_j.
\end{equation}

The symplectic form is obtained by applying the $d\log \wedge d\log$ map to a class 
\begin{equation}\label{ELL}
W_{\bf q}= -\frac{1}{2}\sum_{i,j} 
(e_i, e_j)\cdot  B_i \wedge B_j - \sum_{i}  B_i \wedge X_i
  \in \Lambda^2\Q({\cal T}_{\Lambda})^*. 
  \end{equation}

\bt i) Given a mutation ${\bf q} \to \widetilde {\bf q}$ in the direction $e_k$, one has 
$$
\mu_{e_k}^*W_{\bf \widetilde q}  - W_{\bf q} = 
(1+ X^o_k) \wedge X^o_k - 
(1+X_k) \wedge X_k.
$$ 
ii) The mutations preserve the symplectic, and hence the Poisson structure. 
\et

The second claim follows immediately from the first. The first is Proposition 2.14 in \cite{FG3}. 
The first claim implies that there is a canonical class in $K_2({\cal T}_{\Lambda})$ preserved by the mutations. 
So it gives rise to a canonical line bundle with connection on the symplectic double. 
Notice that the part ii) follows immediately from Theorem-Definition \ref{4.28.03.11ccc}. 
Indeed, the classical limit of an automorphism of a quantum torus algebra preserves the 
corresponding Poisson structure.

\paragraph{The classical 
cluster symplectic double.} Now we are ready to define the cluster symplectic double variety. 
The construction follows the definition of cluster Poisson and $K_2$-varieties given in \cite{FG2}. 
Starting with a quiver ${\bf q}$, we assign to it the split algebraic torus 
${\cal T}_{\bf q}$ with the cluster symplectic double coordinates $(B_i, X_i)$. 
Then we mutate the quiver ${\bf q}$ in the directions of all basis vectors, getting 
new split algebraic tori, and continue this process indefinitely. We glue each pair of 
split tori ${\cal T}_{\bf q}$ and ${\cal T}_{\bf \widetilde q}$ related by a quiver mutation 
according to the mutation formula 
(\ref{zx1qt})-(\ref{4.28.03.11x}). Finally, given two quivers ${\bf q}$ and ${\bf q'}$ 
related by a sequence of mutations, such that there is an isomorphism 
of quivers $i: {\bf q} \to {\bf q'}$ which induces the same 
isomorphism of tori $i_{\cal T}: {\cal T}_{\bf q} \to {\cal T}_{\bf q'}$ as the 
sequence of cluster mutations relating ${\bf q}$ and ${\bf q'}$, we identify the 
tori ${\cal T}_{\bf q}$ and ${\cal T}_{\bf q'}$ according the isomorphism $i_{\cal T}$. 
This way we get a possibly non-separable prescheme, which by abuse of terminology is called 
a cluster symplectic double variety. 

\vskip 3mm
By talking about a {\it cluster symplectic double variety structure on an actual space ${\cal D}$} we 
mean that ${\cal D}$ has a collection of rational coordinate systems,  
assigned to cluster mutations of a quiver 
${\bf q}$ as explained above, and related by the compositions 
of cluster symplectic double transformations (\ref{zx1qt})-(\ref{4.28.03.11x}). 
Precisely,  given a space ${\cal D}$ we have to provide the following: 

\begin{itemize} 

\item A \underline{collection of quivers}, usually infinite, such that any two of them are 
related by a sequence of quiver mutations inside of a given collection.

\item A \underline{construction} assigning to each of the quivers  
a cluster symplectic double rational coordinate system on ${\cal D}$. 

\item A \underline{proof} that the coordinates systems assigned to any pair of the quivers related by a 
 quiver mutation are related by the cluster symplectic double 
transformations (\ref{zx1qt})-(\ref{4.28.03.11x}). 

\end{itemize} 

This is precisely what we do in the paper: define a moduli space
${\cal D}_{\G, \bS}$ assigned to a pair $(\G, \bS)$; consider the collection of 
quivers introduced in \cite{FG1} for the pair $(PGL_m, \bS)$; construct 
the cluster symplectic double rational coordinate systems on the moduli space 
${\cal D}_{PGL_m, \bS}$, and prove that they are related by the cluster symplectic double 
transformations (\ref{zx1qt})-(\ref{4.28.03.11x}).

\end{document}